\renewcommand{\le}{\leqslant}
\renewcommand{\P}{\operatorname{\mathsf{P}}}
\DeclareMathOperator{\E}{\mathsf{E}}
\DeclareMathOperator{\var}{\mathsf{Var}}
\DeclareMathOperator{\cov}{\mathsf{Cov}}
\DeclareMathOperator{\are}{ARE}
\DeclareMathOperator{\sign}{sign}
\newcommand{\asin}{\operatorname{sin}^{-1}}
\providecommand{\abs}[1]{\lvert#1\rvert}
\providecommand{\bigabs}[1]{\bigl\lvert#1\bigr\rvert}
\providecommand{\Bigabs}[1]{\Bigl\lvert#1\Bigr\rvert}
\providecommand{\norm}[1]{\lVert#1\rVert}
\providecommand{\Bigprob}[2][]{\P_{#1}\Bigl(#2\Bigr)}
\newcommand{\ind}[1]{\operatorname{\mathsf{I}}\{#1\}}
\newcommand{\inc}{\nearrow}
\newcommand{\dec}{\searrow}
\newcommand{\conv}{\mathop{\longrightarrow}\limits}
\newcommand{\tr}{\ensuremath{^\mathsf{T}}} 
\renewcommand{\th}{\theta}
\newcommand{\Th}{\Theta}
\providecommand{\si}{\sigma}
\providecommand{\Si}{\Sigma}
\newcommand{\R}{\mathbb{R}}
\newcommand{\N}{\mathbb{N}}
\newcommand{\V}{\mathcal{V}}
\newtheorem{thm}{Theorem}[section]
\newtheorem*{thm*}{Theorem}
\newtheorem*{thmA}{Theorem A: Special-case (l'Hospital-type monotonicity) rules}
\newtheorem*{thmB}{Theorem B: General (l'Hospital-type monotonicity) rules}
\newtheorem*{thmC}{Theorem C: Refined general (l'Hospital-type monotonicity) rules}
\newtheorem{cor}[thm]{Corollary}
\newtheorem{lem}[thm]{Lemma}
\theoremstyle{remark}
\newtheorem{remark}[thm]{Remark}
\newtheorem*{remark*}{Remark}
\numberwithin{equation}{section}
\numberwithin{thm}{section}
\begin{document}

\begin{frontmatter}

\title{Monotonicity properties of the asymptotic relative efficiency between common correlation statistics in the bivariate normal model}
\runtitle{Monotonicity properties of the ARE}

\begin{aug}
  \author{\fnms{Raymond} \snm{Molzon}\thanksref{a}}
  \and  
  \author{\fnms{Iosif}  \snm{Pinelis}\thanksref{a,e1}\ead[label=e1,mark]{ipinelis@mtu.edu}}
  \runauthor{Raymond Molzon and Iosif Pinelis}
  \affiliation{Michigan Technological University}
  \address[a]{Department of Mathematical Sciences\\Michigan Technological University\\Hough\-ton, Michigan 49931\\\printead{e1}}
\end{aug}

\begin{abstract}
Pearson's $R$ is the most common correlation statistic, used mainly in parametric settings. Most common among nonparametric correlation statistics are Spearman's $S$ and Kendall's $T$. 
We show that for bivariate normal i.i.d.\ samples the pairwise asymptotic relative efficiency ($\are$) between these three statistics depends monotonically on the population correlation coefficient $\rho$. 
Namely, $\are_{R,T}(|\rho|)$ increases in $|\rho|\in[0,1)$ from $1.096\dotsc$ to $1.209\dotsc$, $\are_{R,S}(|\rho|)$ increases from $1.096\dotsc$ to $1.439\dotsc$ and $\are_{T,S}(|\rho|)$ increases from 1 to $1.190\dotsc$. 
This monotonicity is a corollary to a stronger result, which asserts that $q_{R,T;a}(\rho)=(\are_{R,T}(\rho)-\are_{R,T}(a)-\are_{R,T}'(a)(\rho-a))/(\rho-a)^2$ is increasing in $\rho\in(0,1)$, where $a\in\{0,1\}$, and similarly for the two functions $q_{R,S;a}$ and $q_{T,S;a}$ with $\are_{R,S}$ and $\are_{T,S}$ in place of $\are_{R,T}$. 
Another immediate corollary is the existence of quadratic (in $\rho$) polynomials $L_{R,T;a}(\rho)$ and $U_{R,T;a}(\rho)$ such that $L_{R,T;a}(\rho)\le\are_{R,T}(\rho)\le U_{R,T;a}(\rho)$ for $\rho\in(0,1)$ and $a\in\{0,1\}$, and similar quadratic bounds are given for $\are_{R,S}$ and $\are_{T,S}$. 
The proofs rely on the use of l'Hospital-type rules for monotonicity patterns.
\end{abstract}

\begin{keyword}[class=AMS]
\kwd[Primary ]{62F05}
\kwd{62H20}
\kwd[; secondary ]{26A48}
\kwd{62F03}
\kwd{62G10}
\kwd{62G20}
\end{keyword}

\begin{keyword}
\kwd{asymptotic relative efficiency}
\kwd{l'Hospital-type rules}
\kwd{hypothesis tests}
\kwd{Ken\-dall's correlation}
\kwd{monotonicity}
\kwd{Pear\-son's correlation}
\kwd{Spear\-man's correlation}
\end{keyword}

\end{frontmatter}

\tableofcontents

\section{Introduction}\label{sec:intro}
Pearson's $R$, Spearman's $S$ and Kendall's $T$ are the three most commonly used correlation statistics, the latter two especially in nonparametric studies. 
When the population distribution is bivariate normal, the question of independence between the two random variables (r.v.'s) reduces to deciding if the population correlation $\rho$ is 0. 
In the case of testing $H_0\colon\rho=0$, it is known that the Pitman asymptotic relative efficiency ($\are$) of $R$ to $S$ is $\frac{\pi^2}9$ \cite{hot36} and that of $T$ to $S$ is 1 \cite{moran51} (and hence the $\are$ of $R$ to $T$ is $\frac{\pi^2}9$ as well).
While perhaps less common in practice, one could also use any three of these statistics to test hypotheses of the form $H_0\colon\rho=\rho_0$ (against alternatives $\rho>\rho_0$, $\rho<\rho_0$, or $\rho\ne\rho_0$) for arbitrary $\rho_0\in(-1,1)$. 
In \cite{bor02}, values of the $\are_{S,R}(\rho_0)$ (the $\are$ of $S$ to $R$ for the null hypothesis $\rho=\rho_0$) are tabulated for several values of $\rho_0\in[0,1)$; several values of $\are_{T,R}(\rho_0)$ are given in \cite{moran79} as well. 

In this paper, we show that $\are_{R,T}(|\rho_0|)$ is strictly increasing in $|\rho_0|\in[0,1)$ from $1.096\dotsc$ to $1.209\dotsc$, $\are_{R,S}(|\rho_0|)$ increases from $1.096\dotsc$ to $1.439\dotsc$, and $\are_{T,S}(|\rho_0|)$ increases from 1 to $1.190\dotsc$. 
Thus, all these $\are$'s stay rather close to 1 for all values of $\rho_0\in(-1,1)$. 
Additionally, several upper and lower quadratic bounds are shown to take place for each of $\are_{R,T}$, $\are_{R,S}$ and $\are_{T,S}$. 
All of these results are immediate corollaries to a stronger result, stated in this paper as Theorem~\ref{thm:quad}.

For testing $H_0\colon\th=\th_0$ in the framework of a given statistical model (against any of the alternative hypotheses $\th\ne\th_0$, $\th>\th_0$, or $\th<\th_0$), under certain general conditions there exists an easily applicable formula for computing the $\are$ between two (sequences of) real-valued test statistics $T_1=(T_{1,n})_{n\in\N}$ and $T_2=(T_{2,n})_{n\in\N}$. 
The main condition (see e.g.~\cite{noe55,hoeff55,konijn56}) is that the distribution function (d.f.)\ of either properly normalized test statistic converges to the standard normal d.f.\ $\Phi$ uniformly in a certain sense as the sample size $n$ tends to $\infty$. 
Particularly, if there exist continuous real-valued functions $\mu_{T_j}$ and $\si_{T_j}$ on the parameter space $\Th$ such that
\begin{equation}\label{eq:ub}
\sup_{\th\in\V}\sup_{z\in\R}
  \Bigabs{\Bigprob[\th]{\frac{T_{j,n}-\mu_{T_j}(\th)}{\sigma_{T_j}(\th)/\sqrt n}\le z}-\Phi(z)}
  \conv_{n\rightarrow\infty}0,
\end{equation}
where $\V$ is some neighborhood of $\th_0$ chosen such that $\mu_{T_j}$ is continuously differentiable and $\si_{T_j}>0$ on $\V$ for $j=1,2$, then the $\are$ of $T_1$ to $T_2$ may be expressed by the formula
\begin{equation}\label{eq:are}
\are(\th_0):=\are_{T_1,T_2}(\th_0)
  =\frac{\si_{T_2}^2(\th_0)}{\si_{T_1}^2(\th_0)}\,\frac{\mu_{T_1}'(\th_0)^2}{\mu_{T_2}'(\th_0)^2},
\end{equation}
assuming that $\mu_{T_j}'(\th_0)>0$. 
The functions $\mu_{T_j}$ and $\si_{T_j}/\sqrt n$ may be called the asymptotic mean and standard deviation, respectively, of the sequence $T_j$. 

Berry-Ess\'een bounds provide a nice way to verify the condition \eqref{eq:ub}. 
Such bounds for the Kendall and Spearman statistics, which are instances of so-called $U$- and $V$-statistics, are essentially well known; see e.g.\ \cite{kor94}; in fact, we are using here a result by Chen and Shao \cite{chen07} and a convenient representation of any $V$-statistic as a $U$-statistic \cite{hoeff63}. 
As for a Berry-Ess\'een bound for the Pearson correlation statistic, we are using an apparently previously unknown result in \cite{nonlinear}. 

According to the formula \eqref{eq:are}, the ARE between two test statistics can be expressed in terms of the asymptotic means and variances of the two statistics. 
In turn, the asymptotic variance of either $T$ or $S$ in the bivariate normal model 
can be expressed using Schl\"afli's formula \cite{schlaf60} for the volume of the spherical tetrahedron in $\R^4$. 
Such formulas have been of significant interest to a number of authors; see e.g.\ the recent papers \cite{kohno07} and \cite{mura05}. 
We remark also that Plackett \cite{plack54} obtained a result more general than Schl\"afli's. 
Actually, here we are using formulas by David and Mallows \cite{david61} which are based on \cite{plack54}. 

To prove the main result, we use l'Hospital-type rules for the monotonicity pattern of a function $r=\frac{f}{g}$ on some interval $(a,b)$. 
Knowledge of the monotonicity of $\frac{f'}{g'}$ on $(a,b)$, along with the sign of $gg'$ on $(a,b)$, allows one to obtain the monotonicity pattern of $r$; see Pinelis \cite{pin06} and the bibiliography there for several variants of these rules and applications to various problems. 
For convenient reference these rules are stated as Theorems~A, B, and C in Section~\ref{sec:proof.mon}.

\section{Monotonicity properties of the ARE in the bivariate normal model}\label{sec:mon}
Let $(V_n)=:\bigl((X_n,Y_n)\bigr)$ be a sequence of independent, identically distributed (i.i.d.) nondegenerate bivariate normal r.v.'s with 
\begin{equation*}
\E V_1=:(\mu_X,\mu_Y)\quad\text{and}\quad
\cov(V_1)=:\begin{bmatrix}\sigma_X^2&\rho\sigma_X\sigma_Y\\\rho\sigma_X\sigma_Y&\sigma_Y^2\end{bmatrix};
\end{equation*}
note that $\rho\in(-1,1)$ is the correlation coefficient between $X_1$ and $Y_1$. 
Let
\begin{equation}\label{eq:R}
R:= R_n:= \frac{\sum_{i=1}^n(X_i-\bar X)(Y_i-\bar Y)}
  {\sqrt{\sum_{i=1}^n(X_i-\bar X)^2}\sqrt{\sum_{i=1}^n(Y_i-\bar Y)^2}},
\end{equation}
where $(\bar X,\bar Y):= \frac1n\sum_{i=1}^nV_i$; 
$R$ is commonly called Pearson's product-moment correlation coefficient, and it is the maximum-likelihood estimator of $\rho$. 
Spearman's rank correlation is
\begin{equation}\label{eq:S}
S:= S_n
  := \frac{12}{n^3-n}\sum_{i=1}^nr(X_i)r(Y_i)-\frac{3(n+1)}{n-1},
\end{equation}
where $r(X_i):=\sum_{j=1}^n\ind{X_j\le X_i}$ and $r(Y_i):=\sum_{k=1}^n\ind{Y_k\le Y_i}$ are the ranks (and $\ind{\cdot}$ denotes the indicator function). 
Note that $S$ is simply the product-moment correlation of the sample of ranks $\bigl(r(X_1)$,$r(Y_1)),\dotsc$, $(r(X_n),r(Y_n)\bigr)$. 
Let next
\begin{equation*}
J_{ij}:= \ind{X_j<X_i}\ind{Y_j<Y_i},
\end{equation*}
and let
\begin{equation}\label{eq:T}
T:= T_n:= \frac1{\binom n2}\sum_{1\le i<j\le n}h_T(V_i,V_j)
\end{equation}
denote Kendall's correlation statistic, where $h_T(V_i,V_j):= 2(J_{ij}+J_{ji})-1$, so that almost surely (a.s.) $h_T(V_i,V_j)=\pm1$ depending on whether the pair $(V_i,V_J)$ is concordant or discordant; also, $\E h_T(V_i,V_j)=0$ if $V_i$ and $V_j$ are independent.

Consider the hypothesis test $H_0\colon\rho=\rho_0$ against the alternative $H_1\colon\rho\ne\rho_0$ (or again, either of the two one-sided alternatives), where $\rho_0\in(-1,1)$. 
We shall show that each of $R$, $S$, and $T$ satisfies the condition \eqref{eq:ub}, so that \eqref{eq:are} may be used to express the $\are$ between any two of these statistics. 
Further, it is easy to see, and also will be clear from what follows, that $\si_R^2$, $\si_S^2$, and $\si_T^2$ are all even functions of $\rho$, and also $\mu_R$, $\mu_S$, and $\mu_T$ are odd functions, so that the $\are$ of any pair of these statistics is even. 
See Figure~\ref{fig:are} for a plot of these three functions, and note it suggests each of the pairwise $\are$'s is strictly increasing on $(0,1)$. 
Further, the shapes of these plots suggest the functions may be well-approximated by a quadratic polynomial. 
Indeed, the monotonicity of the $\are$ and a quadratic approximation shall be immediate results of the following:

\begin{thm}\label{thm:quad}
For the test of the null hypothesis $\rho=\rho_0$ against any of the three alternative hypotheses: $\rho\ne\rho_0$, $\rho>\rho_0$, or $\rho<\rho_0$, let
\begin{equation*}
q_a(\rho_0):= q_{T_1,T_2;a}(\rho_0):=
  \frac{\are_{T_1,T_2}(\rho_0)-\are_{T_1,T_2}(a)-\are_{T_1,T_2}'(a)(\rho_0-a)}{(\rho_0-a)^2}
\end{equation*}
for $\rho_0\in[0,a)\cup(a,1)$ and $a\in[0,1]$, where $T_i$ is one of the statistics $R$, $S$, or $T$; 
here and in what follows, $\are_{T_1,T_2}(a)$ and $\are_{T_1,T_2}'(a)$ are understood to mean $\are_{T_1,T_2}(1-)$ and $\are_{T_1,T_2}'(1-)$, respectively, when $a=1$. 
Then
\begin{enumerate}
\item[(RT0)]\label{quadRT0}
$q_{R,T;0}$ is increasing from 0.0966$\dotsc$ to 0.1125$\dotsc$;
\item[(RT1)]\label{quadRT1}
$q_{R,T;1}$ is increasing from 0.1510$\dotsc$ to 0.2247$\dotsc$;
\item[(TS0)]\label{quadTS0}
$q_{T,S;0}$ is increasing from 0.0984$\dotsc$ to 0.1904$\dotsc$;
\item[(TS1)]\label{quadTS1}
$q_{T,S;1}$ is increasing from 0.5516$\dotsc$ to 1.8200$\dotsc$;
\item[(RS0)]\label{quadRS0}
$q_{R,S;0}$ is increasing from 0.2045$\dotsc$ to 0.3428$\dotsc$;
\item[(RS1)]\label{quadRS1}
$q_{R,S;1}$ is increasing from 0.8682$\dotsc$ to 2.6639$\dotsc$.
\end{enumerate}
\end{thm}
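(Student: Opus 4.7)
The six parts of Theorem~\ref{thm:quad} share a common structure, so fix a pair of statistics and set $F := \are_{T_1,T_2}$, $f_a(\rho) := F(\rho) - F(a) - F'(a)(\rho - a)$ and $g_a(\rho) := (\rho - a)^2$, so that $q_a = f_a/g_a$. Direct computation gives $f_a(a) = f_a'(a) = 0$ and $g_a(a) = g_a'(a) = 0$, while $g_a'(\rho) = 2(\rho - a)$ is nonzero on $(0,1)\setminus\{a\}$. Two successive applications of the basic l'Hospital-type monotonicity rule (Theorem~A), invoked at the endpoint $a \in \{0,1\}$ as appropriate, reduce the monotonicity of $q_a$ on $(0,1)$ first to that of $f_a'/g_a' = (F'(\rho) - F'(a))/(2(\rho - a))$ and then to that of $f_a''/g_a'' = \tfrac12\, F''(\rho)$. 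Consequently, the entire theorem follows once it is shown that each of $\are_{R,T}''$, $\are_{T,S}''$, $\are_{R,S}''$ is strictly increasing on $(0,1)$; one such verification per pair of statistics simultaneously handles both the $a=0$ and $a=1$ claims.

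To implement this, the first concrete step is to derive closed forms for the three ARE functions. Using $\mu_R'(\rho) = 1$, $\mu_T'(\rho) = 2/(\pi\sqrt{1-\rho^2})$ and $\mu_S'(\rho) = 3/(\pi\sqrt{1-\rho^2/4})$ for the derivatives of the asymptotic means, together with the David--Mallows representations (based on Plackett's extension of Schl\"afli's formula) of $\sigma_T^2(\rho)$ and $\sigma_S^2(\rho)$ as integrals reducible to $\arcsin$ and algebraic expressions in $\rho$, one obtains from \eqref{eq:are} each of $\are_{R,T}$, $\are_{T,S}$, $\are_{R,S}$ as an explicit (though lengthy) function of $\rho$. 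Repeated differentiation then produces formal expressions for $F''$ and $F'''$.

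The heart of the proof, and what I expect to be the main obstacle, is the verification that $F''$ is strictly increasing on $(0,1)$ in each of the three cases. A naive attempt to show $F''' > 0$ by direct sign analysis runs into the sheer algebraic bulk of the resulting expression. Instead, the natural strategy is to recast monotonicity (or positivity) of a given ratio as an assertion about a quotient $p/q$ in which $p$ and $q$ share a common zero at one of the endpoints of $(0,1)$, then apply the general and refined l'Hospital-type rules (Theorems~B and~C) to pass to $p'/q'$, iterating this process as many times as necessary. Each stage should algebraically simplify the ratio under consideration, until one arrives at an elementary trigonometric or polynomial expression whose monotonicity can be read off by inspection. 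Finally, the numerical endpoint values claimed for each $q_a$ follow immediately from the preceding work: Taylor's theorem gives $\lim_{\rho \to a} q_a(\rho) = F''(a)/2$, while $q_0(1-) = F(1-) - F(0) - F'(0)$ and $q_1(0+) = F(0) - F(1-) + F'(1-)$ are evaluated directly from the closed forms for $F$ and $F'$ at $0$ and $1-$.
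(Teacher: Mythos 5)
Your reduction is logically sound as far as it goes: since $f_a(\rho):=\are(\rho)-\are(a)-\are'(a)(\rho-a)$ and $g_a(\rho):=(\rho-a)^2$, together with their first derivatives, all vanish at $\rho=a$, two applications of Theorem~A (at the endpoint $0+$ when $a=0$, at $1-$ when $a=1$) would indeed deduce all six statements from the single assertion that $\are_{T_1,T_2}''$ is strictly increasing on $(0,1)$. The gap is that this assertion is never proved: the entire burden of the theorem is shifted into the paragraph beginning ``the heart of the proof,'' which describes a strategy rather than executing one. Worse, the target you have reduced to is \emph{strictly stronger} than the theorem itself --- if $F''=\are''$ were increasing on all of $(0,1)$, the same two-step argument would give $q_a$ increasing for \emph{every} $a\in[0,1]$ simultaneously, and the paper explicitly remarks (just before Lemma~\ref{lem:q_RT}) that a proof of that stronger statement ``has yet to be found.'' So you have replaced the theorem by a claim that is, at best, an open problem, and you offer no evidence that it is even true.

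It is also worth seeing why the paper's actual argument is structured the way it is, because your sketch proposes exactly the route it avoids. The paper never differentiates $\are$ as a quotient: it fixes $f,g$ with $\are=f/g$, sets $f_0=f-bg-c(x-a)g$ and $g_0=(x-a)^2g$, and then repeatedly passes from $(f_{i-1},g_{i-1})$ to $(a_if_{i-1}',a_ig_{i-1}')$ with hand-picked positive multipliers $a_i$ (rational in $x$ and $\sqrt{4-x^2}$), chosen so that after $n$ steps ($n=4$, $10$, $5$ for the three pairs) the ratio $r_n$ is algebraic and its monotonicity can be certified by quantifier elimination; the descent back to $r_0=q_a$ then requires, at every level, locating and ordering the roots of $f_i$ and $g_i$, checking the sign of $g_ig_i'$ on each resulting subinterval, and selecting the special-case, general, or refined rule accordingly. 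Your plan to compute $F''$ and $F'''$ explicitly and ``simplify'' is precisely the quotient-rule blow-up this scheme is designed to sidestep, and nothing in your proposal supplies the multipliers, the root bookkeeping, or the endpoint sign checks that make Theorems~A--C applicable. A smaller but genuine omission: for the pairs involving $S$, the quantities $\are(1-)$ and $\are'(1-)$ are themselves $0/0$ limits --- they require $\sigma_S^2(1-)=0$ (Lemma~\ref{lem:siS(1)=0}) and repeated l'Hospital computations --- so even your ``immediate'' evaluation of $q_1(0+)=F(0)-F(1-)+F'(1-)$ and of the limits $F''(a)/2$ is not immediate.
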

The term ``increasing'' will mean for us ``strictly increasing,'' and similarly ``decreasing'' will mean ``strictly decreasing.'' 
Exact expressions for the endpoint values 0.0966$\dotsc$, 0.1125$\dotsc$, $\dotsc$ are given at the end of the respective sections RT0, RT1, \dots in the appendices. 
All proofs are deferred to Section~\ref{sec:proofs}.

\begin{figure}[h!t]
\begin{center}
\includegraphics[height=5cm]{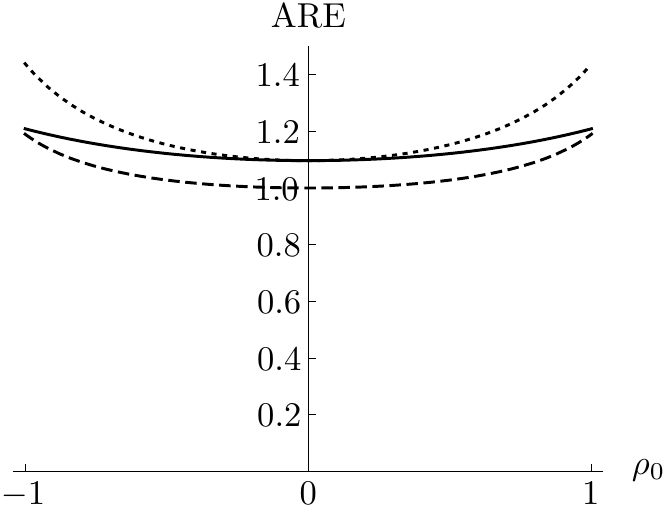}
\end{center}
\caption{Plots of $\are_{R,T}(\rho_0)$ (solid), $\are_{T,S}(\rho_0)$ (dashed) and $\are_{R,S}(\rho_0)$ (dotted)}
\label{fig:are}
\end{figure}

\begin{cor}\label{cor:mon}
For the test of the null hypothesis $\rho=\rho_0$ against any of the three alternative hypotheses: $\rho\ne\rho_0$, $\rho>\rho_0$, or $\rho<\rho_0$, one has
\begin{enumerate}
\item[(RT)]\label{areRT} $\are_{R,T}(|\rho_0|)$ is increasing in $|\rho_0|\in(0,1)$ from $\frac{\pi^2}9=\text{1.0966}\dotsc$ to $\frac{2\pi\sqrt3}{9}=\text{1.2091}\dotsc$;
\item[(TS)]\label{areTS} $\are_{T,S}(|\rho_0|)$ is increasing in $|\rho_0|\in(0,1)$ from 1 to $\frac{9\sqrt3(11\sqrt5-15)}{40\pi}=\text{1.1904}\dotsc$;
\item[(RS)]\label{areRS} $\are_{R,S}(|\rho_0|)$ is increasing in $|\rho_0|\in(0,1)$ from $\frac{\pi^2}9=\text{1.0966}\dotsc$ to $\frac{3(11\sqrt5-15)}{20}=\text{1.4395}\dotsc$.
\end{enumerate}
\end{cor}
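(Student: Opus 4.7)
My plan is to deduce Corollary~\ref{cor:mon} directly from the $a=0$ parts of Theorem~\ref{thm:quad}, using only the evenness of each $\are$ and the classical values at $\rho_0=0$.

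First, because each of $\si_R^2,\si_S^2,\si_T^2$ is even in $\rho$ and each of $\mu_R,\mu_S,\mu_T$ is odd (as remarked just after the definitions of $R$, $S$, $T$), formula~\eqref{eq:are} makes every pairwise $\are_{T_1,T_2}$ an even, smooth function of $\rho_0$ on $(-1,1)$; in particular $\are_{T_1,T_2}'(0)=0$. Hence the definition of $q_{T_1,T_2;0}$ simplifies, giving
\[
\are_{T_1,T_2}(\rho_0)=\are_{T_1,T_2}(0)+\rho_0^2\,q_{T_1,T_2;0}(\rho_0),\qquad\rho_0\in(0,1).
\]
Parts (RT0), (TS0), (RS0) of Theorem~\ref{thm:quad} assert that $q_{T_1,T_2;0}$ is strictly increasing on $(0,1)$ from a strictly positive lower-endpoint value; hence $q_{T_1,T_2;0}$ is positive throughout $(0,1)$, and the product $\rho_0^2\,q_{T_1,T_2;0}(\rho_0)$ of two positive, strictly increasing factors is itself strictly increasing. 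Adding the constant $\are_{T_1,T_2}(0)$ preserves this, and passing to $|\rho_0|$ by evenness yields the three monotonicity statements (RT), (TS), (RS).

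Identifying the endpoint values is routine. At $\rho_0=0$ they are the classical Pitman AREs recalled in Section~\ref{sec:intro}: $\are_{R,T}(0)=\are_{R,S}(0)=\pi^2/9$ and $\are_{T,S}(0)=1$. At $\rho_0\to1^-$ they equal $\are_{T_1,T_2}(0)+q_{T_1,T_2;0}(1^-)$, and substituting the closed-form expressions for $q_{T_1,T_2;0}(1^-)$ (supplied by Theorem~\ref{thm:quad} and derived in the appendix from~\eqref{eq:are} together with the asymptotic means and variances of $R$, $S$, $T$) gives the stated constants $2\pi\sqrt3/9$, $9\sqrt3(11\sqrt5-15)/(40\pi)$, and $3(11\sqrt5-15)/20$. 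Since Theorem~\ref{thm:quad} does all the serious work, the only place where care is needed at the corollary level is checking that the tabulated closed-form values of $q_{T_1,T_2;0}(1^-)$ indeed add to $\are_{T_1,T_2}(0)$ to produce the displayed right-hand endpoints; this is a purely mechanical computation and I do not foresee a genuine obstacle beyond the theorem itself.
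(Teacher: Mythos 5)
Your proposal is correct and follows essentially the same route as the paper: both rest on the identity $\are(x)=\are(0)+x^2q_0(x)$ (valid since evenness gives $\are'(0)=0$) together with parts (RT0), (TS0), (RS0) of Theorem~\ref{thm:quad}, which make $x^2q_0(x)$ a product of positive increasing functions. The only cosmetic difference is that you read off $\are(1-)$ as $\are(0)+q_0(1-)$, whereas the paper cites the values $b=\are(1-)$ computed via l'Hospital at the start of the $a=1$ parts of the theorem's proof; the two agree.
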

This corollary justifies the conjecture that the pairwise $\are$'s are increasing on $(0,1)$, which one would make from observing Figure~\ref{fig:are}.

\begin{cor}\label{cor:bounds}
Let
\begin{align*}
L_a(x):=L_{T_1,T_2;a}(x)&:=\are_{T_1,T_2}(a)+\are_{T_1,T_2}'(a)(|x|-a)+q_{T_1,T_2;a}(0+)(|x|-a)^2,\\
U_a(x):=U_{T_1,T_2;a}(x)&:=\are_{T_1,T_2}(a)+\are_{T_1,T_2}'(a)(|x|-a)+q_{T_1,T_2;a}(1-)(|x|-a)^2,\\
L(x):=L_{T_1,T_2}(x)  &:=L_{T_1,T_2;0}(x)\vee L_{T_1,T_2;1}(x),\\
\text{and}\quad
U(x):=U_{T_1,T_2}(x)  &:=U_{T_1,T_2;0}(x)\wedge U_{T_1,T_2;1}(x)
\end{align*}
for $(T_1,T_2)\in\{(R,T),(T,S),(R,S)\}$, $x\in(-1,1)$, and $a\in\{0,1\}$. 
Then for all $\rho_0\in(-1,0)\cup(0,1)$
\begin{equation*}
L_a(\rho_0)\le L(\rho_0)<\are(\rho_0)<U(\rho_0)\le U_a(\rho_0).
\end{equation*}
\end{cor}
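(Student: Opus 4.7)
The plan is to derive the result almost immediately from Theorem~\ref{thm:quad} together with the evenness of $\are_{T_1,T_2}$ in $\rho$ noted in Section~\ref{sec:mon}. The starting observation is simply the defining identity
\begin{equation*}
\are_{T_1,T_2}(\rho_0)
 = \are_{T_1,T_2}(a) + \are_{T_1,T_2}'(a)(\rho_0-a) + q_{T_1,T_2;a}(\rho_0)(\rho_0-a)^2
\end{equation*}
for $\rho_0\in(0,1)\setminus\{a\}$ and $a\in\{0,1\}$, together with the fact that $(\rho_0-a)^2>0$ on this set.

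Next, I would invoke Theorem~\ref{thm:quad}: for each of the six pairs $\bigl((T_1,T_2),a\bigr)$ the function $q_{T_1,T_2;a}$ is strictly increasing on its domain, and the endpoint values $q_{T_1,T_2;a}(0+)$ and $q_{T_1,T_2;a}(1-)$ are the finite constants tabulated there. Hence $q_{T_1,T_2;a}(0+) < q_{T_1,T_2;a}(\rho_0) < q_{T_1,T_2;a}(1-)$ on $(0,1)\setminus\{a\}$. Multiplying by the positive quantity $(\rho_0-a)^2$ and adding $\are_{T_1,T_2}(a)+\are_{T_1,T_2}'(a)(\rho_0-a)$ to all three sides produces exactly
\begin{equation*}
L_{T_1,T_2;a}(\rho_0) < \are_{T_1,T_2}(\rho_0) < U_{T_1,T_2;a}(\rho_0)
\quad\text{for }\rho_0\in(0,1),\ a\in\{0,1\},
\end{equation*}
where at $\rho_0=a$ the middle term equals the two outer expressions by construction but is still strict on $(0,1)\setminus\{a\}$, which is what we need (the case $\rho_0=a$ is excluded because $\rho_0\ne0$ and the case $\rho_0=1$ is not in our domain).

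To cover negative $\rho_0$, I would appeal to the evenness $\are_{T_1,T_2}(\rho_0)=\are_{T_1,T_2}(|\rho_0|)$ established in Section~\ref{sec:mon}, and observe that the functions $L_{T_1,T_2;a}$ and $U_{T_1,T_2;a}$ are by definition functions of $|\rho_0|$; thus the strict two-sided bound for $|\rho_0|\in(0,1)$ immediately extends to all $\rho_0\in(-1,0)\cup(0,1)$.

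Finally, the outer inequalities $L_a(\rho_0)\le L(\rho_0)$ and $U(\rho_0)\le U_a(\rho_0)$ are nothing more than the definitions of $L=L_{T_1,T_2;0}\vee L_{T_1,T_2;1}$ and $U=U_{T_1,T_2;0}\wedge U_{T_1,T_2;1}$ as a pointwise max and min; and the middle inequalities $L(\rho_0)<\are(\rho_0)<U(\rho_0)$ follow because the strict bound established in the previous paragraph holds simultaneously for both choices $a=0$ and $a=1$, so taking the max of the two lower bounds and the min of the two upper bounds preserves strictness. There is really no obstacle to speak of here; the only thing to be careful about is the convention (declared in the statement of Theorem~\ref{thm:quad}) that $\are_{T_1,T_2}(1)$ and $\are_{T_1,T_2}'(1)$ denote left-hand limits, which makes the expressions $L_1(\rho_0)$ and $U_1(\rho_0)$ well-defined on all of $(-1,1)\setminus\{0\}$.
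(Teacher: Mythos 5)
Your proposal is correct and follows essentially the same route as the paper: the strict inequalities $q_a(0+)<q_a(\rho_0)<q_a(1-)$ from Theorem~\ref{thm:quad} are multiplied by $(\rho_0-a)^2>0$ and rearranged to give $L_a<\are<U_a$ on $(0,1)$, then extended to negative $\rho_0$ by evenness, with the remaining inequalities being immediate from the max/min definitions of $L$ and $U$. No gaps.
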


These piecewise quadratic bounds are illustrated in Figure~\ref{fig:bounds}. Note that $L_0$ and $U_0$ give good quadratic approximations to the $\are$ near the origin, while $L_1$ and $U_1$ are better approximations when $\rho_0$ is near $\pm1$. 

\begin{figure}[h!t]
\subfigure[Plots of $f_{T_1,T_2}$ for $f=\are$ (solid), $f=L$ (dotted) and $f=U$ (dashed)]{
\begin{tabular}{*{3}{c@{\hspace{.5cm}}}}
\includegraphics[width=4.5cm]{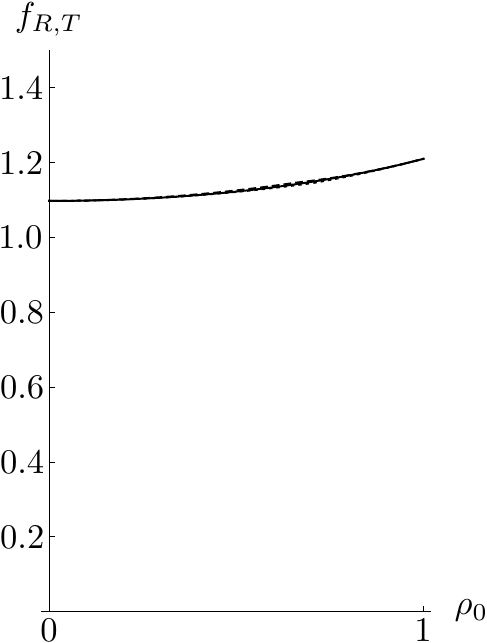}
&\includegraphics[width=4.5cm]{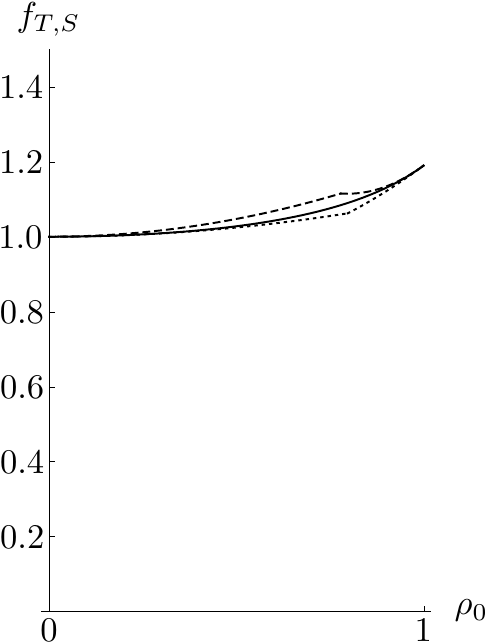}
&\includegraphics[width=4.5cm]{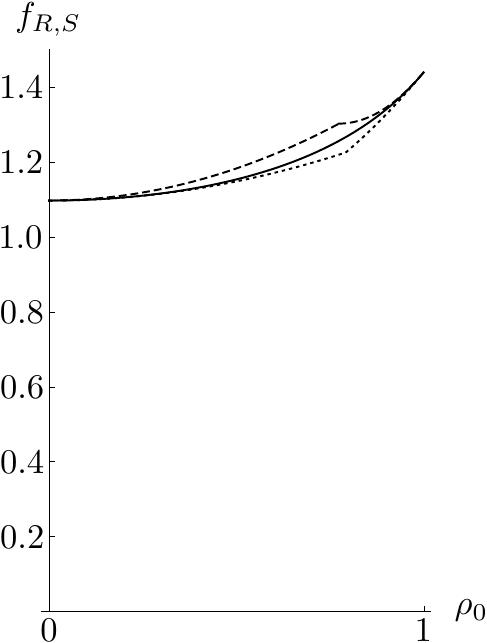}
\end{tabular}
\label{fig:bounds1}
}
\\
\subfigure[Rescaled plots of $f_{T_1,T_2}$ for $f=\are$ (solid), $f=L$ (dotted) and $f=U$ (dashed)]{
\begin{tabular}{*{3}{c@{\hspace{.5cm}}}}
\includegraphics[width=4.5cm]{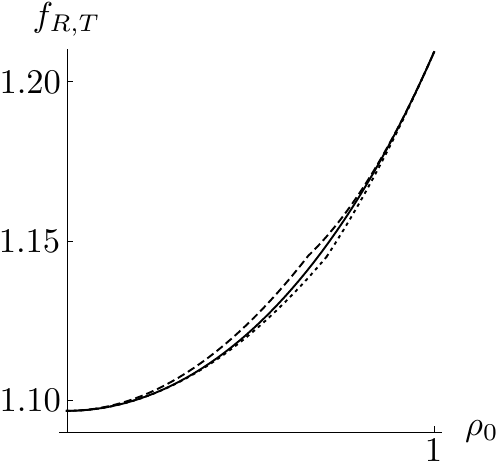}
&\includegraphics[width=4.5cm]{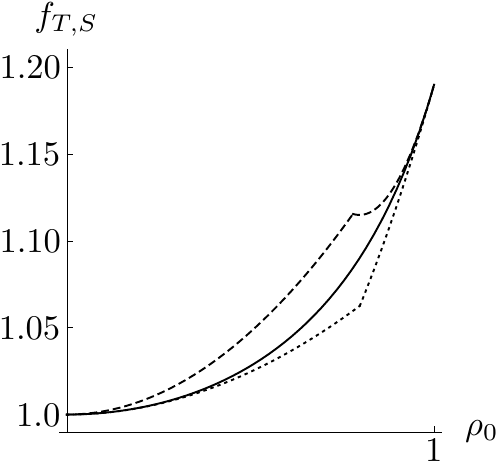}
&\includegraphics[width=4.5cm]{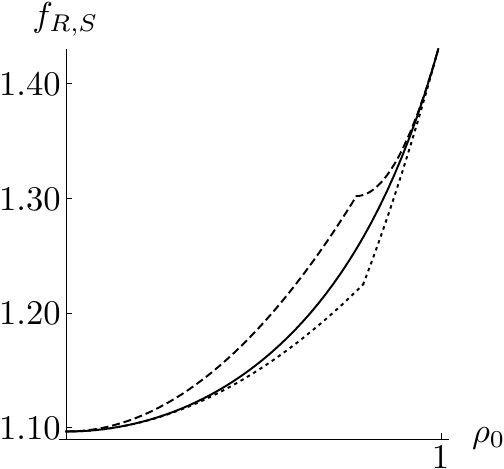}
\end{tabular}
\label{fig:bounds.scaled}
}
\label{fig:bounds}
\caption{Illustration of piecewise quadratic bounds of Corollary~\ref{cor:bounds}}
\end{figure}

\begin{remark}\label{re:num}
Numerical approximations to the various bounds in Corollary~\ref{cor:bounds} are given below. 
\begin{align*}
 L_{R,T;0}(x)&\approx1.0966+0.0966x^2;
&L_{R,T;1}(x)&\approx1.0966-0.0384|x|+0.1510x^2;\\
 U_{R,T;0}(x)&\approx1.0966+0.1126x^2;
&U_{R,T;1}(x)&\approx1.1704-0.1860|x|+0.2248x^2;\\
 L_{T,S;0}(x)&\approx1+0.0984x^2;
&L_{T,S;1}(x)&\approx1-0.3612|x|+0.5516x^2;\\
 U_{T,S;0}(x)&\approx1+0.1905x^2;
&U_{T,S;1}(x)&\approx2.2684-2.8980|x|+1.8200x^2;\\
 L_{R,S;0}(x)&\approx1.0966+0.2046x^2;
&L_{R,S;1}(x)&\approx1.0966-0.5254|x|+0.8683x^2;\\
 U_{R,S;0}(x)&\approx1.0966+0.3429x^2;
&U_{R,S;1}(x)&\approx2.8924-4.1169|x|+2.6640x^2.
\end{align*}
Further, one has
\begin{align*}
 L_{R,T;0}(x)=L_{R,T;1}(x)\text{ when }x\approx0.7067;
&\quad U_{R,T;0}(x)=U_{R,T;1}(x)\text{ when }x\approx0.6573;\\
 L_{T,S;0}(x)=L_{T,S;1}(x)\text{ when }x\approx0.7969;
&\quad U_{T,S;0}(x)=U_{T,S;1}(x)\text{ when }x\approx0.7784;\\
 L_{R,S;0}(x)=L_{R,S;1}(x)\text{ when }x\approx0.7916;
&\quad U_{R,S;0}(x)=U_{R,S;1}(x)\text{ when }x\approx0.7737.
\end{align*}
\end{remark}

\begin{remark}\label{re:tighter}
We note that piecewise quadratic bounds even tighter than the $L_{T_1,T_2}$ and $U_{T_1,T_2}$ could be obtained from Theorem~\ref{thm:quad}. 
The bounds on the $\are$ given in Corollary~\ref{cor:bounds} are derived by appropriately rewriting the inequalities $q_a(0+)<q_a(x)<q_a(1-)$ for $x\in(0,1)$ and $a\in\{0,1\}$. 
Of course, one may use any finite partition $0=x_0<x_1<\dotsb<x_{n-1}<x_n=1$ of the interval $(0,1)$ to obtain the corresponding piecewise quadratic bounds based on the inequalities $q_a(x_{i-1}+)<q_a(x)<q_a(x_i-)$ for $x\in(x_{i-1},x_i)$, for each $i=1,\dotsc,n$. 
We state this as another corollary, whose proof will be omitted due to its similarity to that of Corollary~\ref{cor:bounds}.
\end{remark}

\begin{cor}\label{cor:bounds2}
Let $0=x_0<x_1<\dotsc<x_{n-1}<x_n=1$. 
Then for $\are=\are_{T_1,T_2}$ and $q_a=q_{T_1,T_2;a}$ with $(T_1,T_2)\in\{(R,T),(T,S),(R,S)\}$ and $a\in\{0,1\}$, one has
\begin{equation*}
L_a\le L\le\are\le U\le U_a
\end{equation*}
on $(0,1)$, where
\begin{equation*}
L_a(x):=L_{T_1,T_2;a}(x):=
  \are(a)+\are'(a)(x-a)+q_a(x_{i-1}+)(x-a)^2
\end{equation*}
and
\begin{equation*}
U_a(x):=U_{T_1,T_2;a}(x):=
  \are(a)+\are'(a)(x-a)+q_a(x_i-)(x-a)^2
\end{equation*}
for $x\in(x_{i-1},x_i)$, and $L:=L_0\vee L_1$ and $U:=U_0\wedge U_1$.
\end{cor}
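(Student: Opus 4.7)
The plan is to derive the refined piecewise quadratic bounds by directly exploiting the fact that, according to Theorem~\ref{thm:quad}, each of the six functions $q_{T_1,T_2;a}$ is strictly increasing on $(0,1)$; the argument is entirely analogous to that of Corollary~\ref{cor:bounds}, with only the added bookkeeping of a general partition.

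First I would start from the defining identity
\begin{equation*}
\are(x)=\are(a)+\are'(a)(x-a)+q_a(x)\,(x-a)^2,
\end{equation*}
which holds for all $x\in(0,1)$ and $a\in\{0,1\}$ (with the one-sided limit convention at $a=1$). Given a partition $0=x_0<x_1<\dotsb<x_n=1$, fix an index $i\in\{1,\dotsc,n\}$ and an $x\in(x_{i-1},x_i)$. Theorem~\ref{thm:quad} gives the strict increase of $q_a$ on $(0,1)$, hence
\begin{equation*}
q_a(x_{i-1}+)<q_a(x)<q_a(x_i-).
\end{equation*}
Because $(x-a)^2>0$ on $(x_{i-1},x_i)$ (note $a\in\{0,1\}$ and $x\in(0,1)$), multiplying through by $(x-a)^2$ and adding the common affine term $\are(a)+\are'(a)(x-a)$ yields the strict inequalities $L_a(x)<\are(x)<U_a(x)$ on each open sub-interval. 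Taking $L:=L_0\vee L_1$ and $U:=U_0\wedge U_1$ then gives $L_a\le L\le\are\le U\le U_a$ on $(0,1)\setminus\{x_1,\dotsc,x_{n-1}\}$.

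To finish, I would handle the (measure-zero) set of partition nodes $x_1,\dotsc,x_{n-1}$: at such a point $x_i$, the piecewise quadratic $L_a$ and $U_a$ have one-sided limits from either side, while $\are$ is continuous on $(0,1)$; the stated $\le$ inequalities (rather than $<$) at these nodes follow by passing to one-sided limits in the strict inequalities already proved on the neighbouring sub-intervals.

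There is no real obstacle here: the content is carried entirely by the monotonicity of $q_a$ supplied by Theorem~\ref{thm:quad}, and the remaining work is the algebraic rearrangement of $q_a(x_{i-1}+)<q_a(x)<q_a(x_i-)$ into a quadratic two-sided envelope for $\are$. This is precisely why the authors indicate that the proof may be omitted as being essentially identical to that of Corollary~\ref{cor:bounds}, the only difference being that the single interval $(0,1)$ is replaced by an arbitrary subdivision thereof.
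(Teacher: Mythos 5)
Your proposal is correct and follows exactly the route the paper intends: the authors omit the proof of this corollary precisely because it is the same rearrangement of $q_a(x_{i-1}+)<q_a(x)<q_a(x_i-)$ (via the monotonicity of $q_a$ from Theorem~\ref{thm:quad}) that proves Corollary~\ref{cor:bounds}, applied on each subinterval of the partition. Your additional care at the partition nodes, where the strict inequalities relax to $\le$, is a sensible and accurate touch.
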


Corollary~\ref{cor:bounds2} is illustrated by Figure~\ref{fig:bounds3}; 
the bounds $L$ and $U$ are based on the partition $0=x_0<x_1<x_2=1$, where $x_1=(x_1)_{T_1,T_2}$ is chosen as the mean of the solutions to $L_0=L_1$ and $U_0=U_1$ (from Corollary~\ref{cor:bounds}), whose approximate values are given in Remark~\ref{re:num}. 
That is, $(x_1)_{R,T}\approx\frac12(0.7067+0.6573)\approx0.6820$, $(x_1)_{T,S}\approx\frac12(0.7969+0.7784)\approx0.7876$ and $(x_1)_{R,S}\approx\frac12(0.7916+0.7737)\approx0.7826$.

\begin{figure}[ht]
\begin{center}
\begin{tabular}{*{3}{c}}
\includegraphics[width=4.5cm]{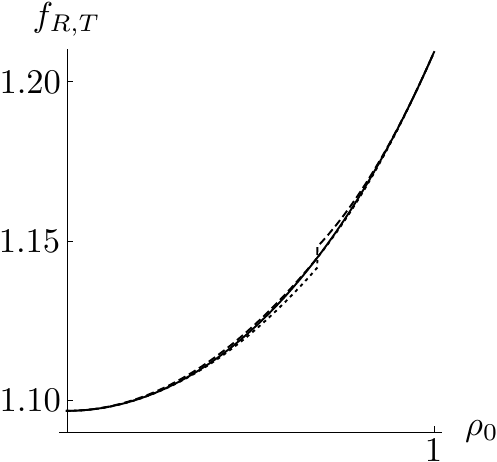}
&\includegraphics[width=4.5cm]{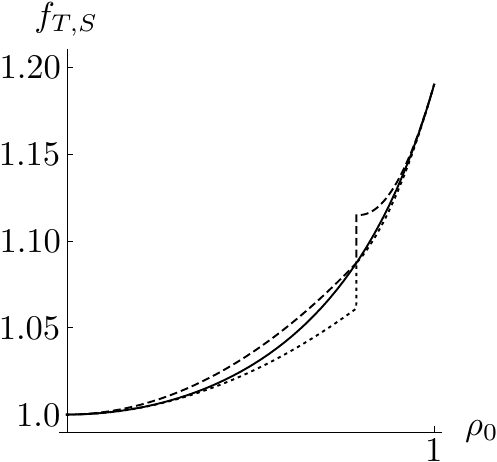}
&
\includegraphics[width=4.5cm]{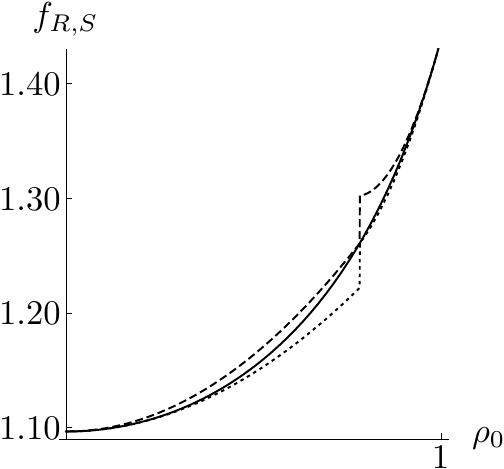}
\end{tabular}
\end{center}
\caption{Illustration of Corollary~\ref{cor:bounds2}, using partition $0=x_0<x_1<x_2=1$, where $(x_1)_{R,T}\approx0.682$, $(x_1)_{T,S}\approx0.788$, $(x_1)_{R,S}\approx0.783$; plots are of $f_{T_1,T_2}$ for $f=\are$ (solid), $f=L$ (dotted) and $f=U$ (dashed)}
\label{fig:bounds3}
\end{figure}

Note also that Corollary~\ref{cor:bounds} immediately implies even better quartic bounds on $\are_{R,S}$:
\begin{cor}\label{cor:quartic}
Let
\begin{equation*}
\tilde L_{R,S;a}:=L_{R,T;a}\cdot L_{T,S;a}
\quad\text{and}\quad
\tilde U_{R,S;a}:=U_{R,T;a}\cdot U_{T,S;a}
\end{equation*}
for $a\in\{0,1\}$, and also let
\begin{equation*}
\tilde L_{R,S}:=\tilde L_{R,S;0}\vee\tilde L_{R,S;1}
\quad\text{and}\quad
\tilde U_{R,S}:=\tilde U_{R,S;0}\wedge\tilde U_{R,S;1}.
\end{equation*}
Then
\begin{gather*}
L_{R,S;a}<\tilde L_{R,S;a}<\are_{R,S}<\tilde U_{R,S;a}<U_{R,S;a}
\intertext{for $a\in\{0,1\}$ and}
L_{R,S}<\tilde L_{R,S}<\are_{R,S}<\tilde U_{R,S}<U_{R,S}
\end{gather*}
on $(-1,0)\cup(0,1)$.
\end{cor}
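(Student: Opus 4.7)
The plan rests on the multiplicativity $\are_{R,S}(\rho)=\are_{R,T}(\rho)\,\are_{T,S}(\rho)$, immediate from~\eqref{eq:are} by telescoping the common Pearson factor. Since all six bounds in Corollary~\ref{cor:bounds} are strictly positive on $(-1,0)\cup(0,1)$, multiplying the strict inequalities $L_{R,T;a}<\are_{R,T}<U_{R,T;a}$ by $L_{T,S;a}<\are_{T,S}<U_{T,S;a}$ immediately yields $\tilde L_{R,S;a}<\are_{R,S}<\tilde U_{R,S;a}$ for each $a\in\{0,1\}$, and hence also $\tilde L_{R,S}<\are_{R,S}<\tilde U_{R,S}$ on $(-1,0)\cup(0,1)$.

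What remains is the outer pair $L_{R,S;a}<\tilde L_{R,S;a}$ and $\tilde U_{R,S;a}<U_{R,S;a}$ for $a\in\{0,1\}$ and $t:=|x|\in(0,1)$. Writing each side as a polynomial in $t$ of degree at most~$4$, one checks that the low-order coefficients match identically by the product rule applied to $\are_{R,S}=\are_{R,T}\are_{T,S}$. The Leibniz-type identities $\are'_{R,S}(1)=\are'_{R,T}(1)\are_{T,S}(1)+\are_{R,T}(1)\are'_{T,S}(1)$,
\begin{align*}
q_{R,S;0}(0+)&=q_{R,T;0}(0+)\,\are_{T,S}(0)+\are_{R,T}(0)\,q_{T,S;0}(0+),\\
q_{R,S;1}(1-)&=q_{R,T;1}(1-)\,\are_{T,S}(1)+\are'_{R,T}(1)\are'_{T,S}(1)+\are_{R,T}(1)\,q_{T,S;1}(1-),
\end{align*}
together with $\are'(0)=0$ (evenness) and the definitional relation $\are'(1)-q_{T_1,T_2;1}(0+)=\are(1)-\are(0)$, lead to the explicit factorizations
\begin{align*}
\tilde L_{R,S;0}(t)-L_{R,S;0}(t)&=q_{R,T;0}(0+)\,q_{T,S;0}(0+)\,t^4,\\
U_{R,S;0}(t)-\tilde U_{R,S;0}(t)&=q_{R,T;0}(1-)\,q_{T,S;0}(1-)\,t^2(1-t^2),\\
\tilde L_{R,S;1}(t)-L_{R,S;1}(t)&=t(1-t)^2\bigl[(A-C)+Ct\bigr],\\
U_{R,S;1}(t)-\tilde U_{R,S;1}(t)&=(1-t)^3\bigl[(B'-C')+C't\bigr],
\end{align*}
where $A-C=q_{R,T;1}(0+)[\are_{T,S}(1)-\are_{T,S}(0)]+[\are_{R,T}(1)-\are_{R,T}(0)]\,q_{T,S;1}(0+)$, $C=q_{R,T;1}(0+)q_{T,S;1}(0+)$, $B'=\are'_{R,T}(1)\,q_{T,S;1}(1-)+q_{R,T;1}(1-)\,\are'_{T,S}(1)$, and $C'=q_{R,T;1}(1-)\,q_{T,S;1}(1-)$.

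The first three factorizations are manifestly positive on $(0,1)$ from the positivity of all the $q$-values (Theorem~\ref{thm:quad}) and of the ARE increments (Corollary~\ref{cor:mon}). The fourth is positive provided $B'-C'>0$; rewriting $B'-C'=q_{T,S;1}(1-)\,[\are'_{R,T}(1)-q_{R,T;1}(1-)]+q_{R,T;1}(1-)\,\are'_{T,S}(1)$ reduces this to the single numerical check $\are'_{R,T}(1)>q_{R,T;1}(1-)$, verified from the values $\are'_{R,T}(1)\approx0.2636$ (extracted from the approximation $L_{R,T;1}(x)\approx1.0966-0.0384|x|+0.1510\,x^2$ in Remark~\ref{re:num}) and $q_{R,T;1}(1-)\approx0.2247$ from Theorem~\ref{thm:quad}(RT1). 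This unstructured numerical check is the main obstacle of the proof, since it is the only point where one cannot conclude from the abstract product structure alone; however, given the tabulated values it is immediate. The outer bounds $L_{R,S}<\tilde L_{R,S}$ and $\tilde U_{R,S}<U_{R,S}$ then follow by taking the pointwise $\vee$ and $\wedge$ over $a\in\{0,1\}$ and using evenness to pass from $(0,1)$ to $(-1,0)\cup(0,1)$.
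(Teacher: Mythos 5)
Your proposal is correct, and for the inner inequalities it coincides with the paper's argument: both use the multiplicativity $\are_{R,S}=\are_{R,T}\cdot\are_{T,S}$ together with the positivity of the quadratic bounds from Corollary~\ref{cor:bounds}. Where you genuinely diverge is in the outer inequalities $L_{R,S;a}<\tilde L_{R,S;a}$ and $\tilde U_{R,S;a}<U_{R,S;a}$: the paper disposes of these in one sentence by observing that the (explicit, quartic) differences $\tilde L_{R,S}-L_{R,S}$ and $\tilde U_{R,S}-U_{R,S}$ have no roots on $(-1,1)\setminus\{0\}$ and have the right signs --- essentially a direct computational check on known polynomials. You instead exploit the Leibniz identities for $\are_{R,S}'$ and $\are_{R,S}''$ at $0$ and $1$ (equivalently, the relations $q_0(0+)=\tfrac12\are''(0)$, $q_0(1-)=\are(1)-\are(0)$, $q_1(0+)=\are(0)-\are(1)+\are'(1)$, $q_1(1-)=\tfrac12\are''(1-)$) to cancel the low-order coefficients exactly and factor each difference in closed form; I checked all four factorizations and they are correct, as is the reduction of the last one to $\are_{R,T}'(1-)>q_{R,T;1}(1-)$ (together with $\are_{T,S}'(1-)\ge0$, which you correctly draw from monotonicity). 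What your route buys is transparency: three of the four outer inequalities follow purely from the product structure and the positivity already established in Theorem~\ref{thm:quad} and Corollary~\ref{cor:mon}, and the single place where a genuinely new numerical input is required is isolated explicitly; since the paper supplies exact expressions for $\are_{R,T}'(1-)$ and $q_{R,T;1}(1-)$ in the appendices, that check is rigorous. The paper's route is shorter but hides the computation; yours is longer but structurally self-contained. Both are valid proofs of the corollary.
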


\section{Proofs}\label{sec:proofs}
We first provide Berry-Ess\'een bounds for the distributions of the test statistics $R$, $S$, and $T$ and explicit expressions for the asymptotic mean and variance for each of these statistics. 
Once these facts are established, Theorem~\ref{thm:quad} will be proven with the aid of l'Hospital-type rules for determining the monotonicity pattern of a ratio.

\subsection{Berry-Ess\'een bounds and expressions for the asymptotic means and variances of R, S, and T}\label{sec:be}
Each of $R$, $S$, and $T$ shall be shown to satisfy \eqref{eq:ub}. 
For each of these statistics, it will be clear that $\V$ in \eqref{eq:ub} may be taken to be any open interval containing $\rho_0$ whose closure does not contain the points $-1$ or $1$. 
Further note that each of these three statistics is invariant to linear transformations of the form $X_i\mapsto aX_i+b$ and $Y_i\mapsto cY_i+d$ with $a>0$ and $c>0$. 
So, let us assume without loss of generality (w.l.o.g.) that $\mu_X=\mu_Y=0$ and $\si_X=\si_Y=1$. 
For convenience we allow the values $\rho=\pm1$; then the bivariate normal distribution is degenerate: $Y_i=\pm X_i$ a.s.

Based on the results of \cite[(4.9)]{nonlinear}, one has the following uniform Berry-Ess\'een bound on the distribution of $R$:
\begin{equation*}
\sup_{z\in\R}\Bigabs{\Bigprob[\rho]{\frac{R-\rho}{\si_R(\rho)/\sqrt{n}}\le z}-\Phi(z)}
  \le\frac{A}{\si_R^4(\rho)\sqrt{n}};
\end{equation*}
here and in what follows, $A$ stands for different positive absolute constants, and
\begin{equation}\label{eq:siR}
\si_R^2(\rho):= \E_\rho\bigl(X_1Y_1-\tfrac\rho2(X_1^2+Y_1^2)\bigr)^2=(1-\rho^2)^2;
\end{equation}
the last equality can be checked using the representation $Y_1=\rho X_1+\sqrt{1-\rho^2}Z_1$, where $X_1,Z_1\stackrel{iid}{\sim}N(0,1)$. 
Thus, \eqref{eq:ub} holds for $R$, with 
\begin{equation}\label{eq:muR}
\mu_R(\rho):=\rho.
\end{equation}

By \eqref{eq:T}, $T$ is a $U$-statistic with kernel $h_T$ of degree $m=2$. 
Further, $S$ (defined in \eqref{eq:S}) is a $V$-statistic with a kernel of degree $m=3$; 
Hoeffding \cite[Section 5c]{hoeff63} describes how any $V$-statistic can be expressed as a $U$-statistic of the same degree, so that $S$ is a $U$-statistic with a symmetric kernel $h_{S,n}$ of degree $m=3$. 
Namely, 
\begin{equation*}
S=\frac{1}{\binom n3}\sum_{1\le i<j<k\le n}h_{S,n}(V_i,V_j,V_k),
\end{equation*}
where
\begin{equation}\label{eq:hSn}
\begin{split}
h_{S,n}(V_i,V_j,V_k)
& :=\frac{n-2}{n+1}\,h_S(V_i,V_j,V_k)+\frac1{n+1}\,\bigl(h_T(V_i,V_j)+h_T(V_i,V_k)+h_T(V_j,V_k)\bigr),
\end{split}
\end{equation}
\begin{equation}\label{eq:hS}
h_S:=2(K_{ijk}+K_{ikj}+K_{jik}+K_{jki}+K_{kij}+K_{kji})-3
\end{equation}
and
\begin{equation*}
K_{ijk}:=\ind{X_j<X_i}\ind{Y_k<Y_i}.
\end{equation*}
It follows by Chen and Shao's result \cite[(3.4) in Theorem~3.1]{chen07} that for $m\in\{2,3\}$ and $n\ge m$
\begin{equation}\label{eq:ustat}
\sup_{z\in\R}\Bigabs{\Bigprob{\frac{U-\E U}{\si_1/\sqrt n}\le z}-\Phi(z)}
  \le\frac{A\,C^3}{\si_1^3\sqrt n},
\end{equation}
where $U=\binom nm^{-1}\sum_{1\le i_1<\dotsb<i_m\le n}h(V_{i_1},\dotsc,V_{i_m})$ is any $U$-statistic with a symmetric kernel $h$ such that $|h|\le C$ for some constant $C>0$, 
\begin{equation*}
\si_1^2:=m^2\var g(V_1)>0,
\end{equation*}
and
\begin{equation*}
g(V_1):=\E\bigl[h(V_1,\dotsc,V_m)|V_1\bigr].
\end{equation*}

Now consider $T$ as expressed in \eqref{eq:T}, and recall that $|h_T|=1$. 
One also has
\begin{equation}\label{eq:muT}
\mu_T(\rho):=\E_\rho T=\E_\rho h_T(V_1,V_2)
  =2\E_\rho\bigl(J_{12}+J_{21}\bigr)-1=4\E_\rho J_{12}-1
  =\tfrac 2\pi\asin\rho.
\end{equation}
In order to see this, note that $\E_\rho J_{12}=\P_\rho(X_1-X_2>0,Y_1-Y_2>0)=\P(Z_1>0,\rho Z_1+\sqrt{1-\rho^2}Z_2>0)$, where $Z_1$ and $Z_2$ are independent standard normal r.v.'s. 
By the circular symmetry of the distribution of $(Z_1,Z_2)$ on the plane, we see $\E_\rho J_{12}$ is simply the proportion of the length of the arc of the unit circle between the points $(0,1)$ and $(\sqrt{1-\rho^2},-\rho)$; 
that is,
\begin{equation*}
\E_\rho J_{12}=\tfrac{1}{2\pi}\bigl(\tfrac\pi2-\asin(-\rho)\bigr)=\tfrac14+\tfrac1{2\pi}\asin\rho,
\end{equation*}
whence \eqref{eq:muT}. 

One can use a similar geometric reasoning to obtain an expression for the asymptotic variance of $T$. 
Let
\begin{equation*}
g_T(V_1):=\E\bigl[h_T(V_1,V_2)|V_1\bigr]=2\E\bigl[J_{12}+J_{21}|V_1\bigr]-1,
\end{equation*}
so that
\begin{equation*}
\begin{split}
\si_T^2(\rho)
& :=4\var_\rho g_T(V_1)
  =16\bigl(\E_\rho J_{12}J_{13}+2\E_\rho J_{12}J_{31}+\E_\rho J_{21}J_{31}-4[\E_\rho J_{12}]^2\bigr).
\end{split}
\end{equation*}
Consider first $\E_\rho J_{12}J_{13}=\P(U_1>0,U_2>0,U_3>0,U_4>0)$, where the $U_i$'s are standard normal r.v.'s with
\begin{equation*}
\Si:=\cov\begin{bmatrix}U_1\\U_2\\U_3\\U_4\end{bmatrix}
=\begin{bmatrix}
  1&\frac12&\rho&\frac\rho2\\
  \frac12&1&\frac\rho2&\rho\\
  \rho&\frac\rho2&1&\frac12\\
  \frac\rho2&\rho&\frac12&1
\end{bmatrix}.
\end{equation*}
That is, $\E_\rho J_{12}J_{13}$ is the probability that the random point $\Si^{1/2}[Z_1,Z_2,Z_3,Z_4]\tr$ lies in the first orthant of 4-dimensional space, where the $Z_i$'s are independent standard normal r.v.'s; 
further, this is simply the ratio of the volume $V(\rho)$ of the spherical tetrahedron $A_1A_2A_3A_4$ to the volume $2\pi^2$ of the unit sphere $S_3:=\{x\in\R^4\colon\norm{x}=1\}$, where the vertices $A_1,A_2,A_3,A_4$ of the tetrahedron are the columns of $\Si^{-1/2}$ normalized to be unit vectors. 
One can use the classical result of Schl\"afli \cite{schlaf60} to obtain the volume of this spherical tetrahedron. 
But, in fact, this work has been indirectly done by David and Mallows in their derivation of the variance of $S$; the probabilities $\E_\rho J_{12}J_{13}$ and $\E_\rho J_{12}J_{31}$ correspond to correlation matrices (r) and (w), respectively, in Appendix 2 of \cite{david61}. 
Using the formulas there, and noting $\E_\rho J_{21}J_{31}=\E_\rho J_{12}J_{13}$ by the symmetry of the normal distribution, one sees
\begin{equation}\label{eq:siT}
\si_T^2(\rho)=\frac49-\frac{16}{\pi^2}\bigl(\asin\tfrac\rho2\bigr)^2,
\end{equation}
which is bounded away from 0 over any closed subinterval of $(-1,1)$, so, by \eqref{eq:ustat}, one has \eqref{eq:ub} for any $\th_0=\rho_0\in(-1,1)$.

We remark that Kendall's monograph \cite[Chapter 10]{ken48} contains derivations of \eqref{eq:muT} and \eqref{eq:siT}. 
Further, Plackett \cite{plack54} has obtained a more general method for calculating $\P(U_1>a_1,U_2>a_2,U_3>a_3,U_4>a_4)$ which reduces to the Schl\"afli method when the $a_i$ are all 0.

Directing attention to $S$, first note that $h_{S,n}$ is bounded (in fact, one can check that $\abs{h_{S,n}(V_1,V_2,V_3)}\in\{1,\frac{n-1}{n+1}\}$ a.s.). 
Using geometric reasoning similar to that used to compute $\E_\rho J_{12}$ (only now using the fact that $X_1-X_2$ and $Y_1-Y_3$ have a correlation of $\frac\rho2$), one finds
\begin{equation*}
\E_\rho K_{123}=\tfrac14+\tfrac1{2\pi}\asin\tfrac\rho2,
\end{equation*}
so that
\begin{equation*}
\mu_{S,n}(\rho):=\E_\rho S
  =\E_\rho h_{S,n}(V_1,V_2,V_3)
  =\tfrac{n-2}{n+1}\,\tfrac6\pi\,\asin\tfrac\rho2+\tfrac{3\mu_T(\rho)}{n+1};
\end{equation*}
accordingly, let
\begin{equation}\label{eq:muS}
\mu_S(\rho):=\lim_{n\to\infty}\mu_{S,n}(\rho)
  =\E_\rho h_S(V_1,V_2,V_3)
  =\tfrac6\pi\asin\tfrac\rho2
\end{equation}
and note that $\sqrt n(\mu_{S,n}-\mu_S)\conv_{n\to\infty}0$ uniformly for $\rho\in(-1,1)$.

Let next
\begin{align*}
\notag\begin{split}
g_{S,n}(V_1)
& :=\E\bigl[h_{S,n}(V_1,V_2,V_3)|V_1\bigr];
\end{split}
\\
\notag\begin{split}
g_S(V_1)
& :=\E\bigl[h_S(V_1,V_2,V_3)|V_1\bigr]
  =4\E\bigl[K_{123}+K_{213}+K_{231}|V_1\bigr]-3;
\end{split}
\\
\notag\begin{split}
\si_{S,n}^2(\rho)
& :=9\var_\rho g_{S,n}(V_1);
\end{split}
\\
\begin{split}
\si_S^2(\rho)
& :=9\var_\rho g_S(V_1)
\\
& =144\bigl(\E_\rho K_{123}K_{145}+2\E_\rho K_{213}K_{415}+4\E_\rho K_{123}K_{415}+2\E_\rho K_{213}K_{451}-9[\E_\rho K_{123}]^2\bigr),
\end{split}
\end{align*}
where $\si_{S,n}(\rho):=\sqrt{\si^2_{S,n}(\rho)}$ and $\si_S(\rho):=\sqrt{\si^2_S(\rho)}$), noting that $\E_\rho K_{231}K_{451}=\E_\rho K_{213}K_{415}$ and $\E_\rho K_{123}K_{451}$ $=\E_\rho K_{132}K_{415}=\E_\rho K_{123}K_{415}$ since the distributions of $\bigl((X_1,Y_1),\dotsc,(X_n,Y_n)\bigr)$ and $\bigl((Y_1,X_1),\dotsc,(Y_n,X_n)\bigr)$ are identical and permutation-invariant. 
It is clear that expressions for $\si_{S,n}^2$ and $\si_S^2$ may be derived in terms of the volumes of spherical tetrahedra via Schl\"afli's formula. 
For the sake of brevity, we omit these details and refer the reader to David and Mallows' derivation of $\var S$; 
note the probabilities $\E_\rho K_{123}K_{145}$, $\E_\rho K_{123}K_{415}$, $\E_\rho K_{213}K_{415}$, $\E_\rho K_{213}K_{451}$ correspond to the correlation matrices (c), (d), (e), and (f), respectively, found in Appendix 2 of \cite{david61}. 
Then one has
\begin{equation}\label{eq:siS}
\sigma^2_S(\rho)=1-\frac{324}{\pi^2}\,\bigl(\asin\tfrac \rho2\bigr)^2+\frac{72}{\pi^2}\,\bigl(I_1(\rho)+2I_2(\rho)+2I_3(\rho)+4I_4(\rho)\bigr),
\end{equation}
where
\begin{align*}
 I_1(x)&:=\int_0^x\frac{\asin\frac{u^3}{4(2-u^2)}}{\sqrt{4-u^2}}\,du,
&I_2(x)&:=\int_0^x\frac{\asin\frac{u}{2(3-u^2)}}{\sqrt{4-u^2}}\,du,\\
 I_3(x)&:=\int_0^x\frac{\asin\frac{u(4-u^2)}{2\sqrt2\sqrt{8-6u^2+u^4}}}{\sqrt{4-u^2}}\,du,
&I_4(x)&:=\int_0^x\frac{\asin\frac{u(4-u^2)}{2\sqrt{12-7u^2+u^4}}}{\sqrt{4-u^2}}\,du;
\end{align*}
an explicit expression of $\si_{S,n}^2$ is not of direct concern to us and so is omitted (though could also be obtained from \cite{david61}). 
Note the integrals $I_1,\dotsc,I_4$ are expressed differently than the corresponding ones found in \cite{david61}, though a simple change of variables shows their equivalence 
\big(the integrals equivalent to $I_1;I_2;I_3;I_4$ are found in Appendix 2 of \cite{david61} in the expressions corresponding to the correlation matrices labeled there by (f); (c); (f); (d) and (e), respectively.\big) 

Now, $\si_{S,n}\conv_{n\to\infty}\si_S$ uniformly over all $\rho\in[-1,1]$ (since, by \eqref{eq:hSn}, $h_{S,n}-h_S=O(1/n)$). 
It will be pointed out in the last paragraph of part (TS0) of the proof of Theorem~\ref{thm:quad} that $\si_S^2>0$ for $\rho\in(-1,1)$. 
It is also clear from \eqref{eq:siS} that $\si_S^2$ is a continuous function of $\rho$, so that the minimum of $\si_S$ over any closed subinterval of $(-1,1)$ is strictly positive. Thus, $\inf_{\rho\in\V}\si_{S,n}(\rho)>0$ for all large enough $n$, where $\V$ is as introduced in the beginning of Section~\ref{sec:be}. 
Referring now to \eqref{eq:ustat} (and replacing there $U$ with $S$, $\E U$ with $\mu_{S,n}$ and $\si_1$ with $\si_{S,n}$), one finds that
\begin{equation*}
\sup_{z\in\R}\Bigabs{\Bigprob[\rho]{\frac{S-\mu_S(\rho)}{\si_S(\rho)/\sqrt n}\le z}-\Phi(z)}\le\frac{A}{\si_{S,n}^3(\rho)\sqrt n}+\bigabs{\Phi(z^\ast)-\Phi(\tfrac{\si_S(\rho)}{\si_{S,n}(\rho)}\,z)}+\bigabs{\Phi(\tfrac{\si_S(\rho)}{\si_{S,n}(\rho)}\,z)-\Phi(z)},
\end{equation*}
where $z^\ast=\frac{\si_S(\rho)}{\si_{S,n}(\rho)}\big(z+\frac{\mu_S(\rho)-\mu_{S,n}(\rho)}{\si_S(\rho)/\sqrt n}\big)$; in turn, the last two terms in the above inequality vanish uniformly over $z\in\R$ and $\rho\in\V$ as $n$ tends to $\infty$ (using well-known properties of the function $\Phi$ and the previously noted facts that $\sqrt n(\mu_S-\mu_{S,n})\to0$ and $\si_{S,n}/\si_S\to1$ uniformly on $\V$), so that $S$ satisfies \eqref{eq:ub}. 

The next result will be used in the proofs of the statements (TS0)~--~(RS1) in Theorem~\ref{thm:quad}:
\begin{lem}\label{lem:siS(1)=0}
One has $\si_S^2(1-)=0$.
\end{lem}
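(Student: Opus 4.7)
The plan is to circumvent any direct evaluation of the integrals $I_j(1)$ and instead exploit the probabilistic representation $\sigma_S^2(\rho)=9\var_\rho g_S(V_1)$ recorded just above \eqref{eq:siS}, combined with continuity of this expression at $\rho=1$. The crucial point is that for $\rho=1$ one has $Y_i=X_i$ almost surely, and on this degenerate event the kernel $h_S$ collapses to a deterministic constant, so the conditional expectation $g_S(V_1)$ has variance $0$.

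\textbf{Step 1 (continuity).} Inspection of \eqref{eq:siS} shows that $\sigma_S^2$ is a continuous function of $\rho\in[-1,1]$: $\asin(\rho/2)$ is continuous, and each $I_j$ is continuous in its upper limit on $[-1,1]$, since the corresponding integrand is bounded on $[0,1]$ (where $\sqrt{4-u^2}\ge\sqrt3$ and $|\asin(\cdot)|\le\pi/2$). Hence $\sigma_S^2(1-)=\sigma_S^2(1)$. Equivalently, by continuity in $\rho\in[-1,1]$ of each of the five probabilities $\E_\rho K_{123}$, $\E_\rho K_{123}K_{145}$, $\E_\rho K_{213}K_{415}$, $\E_\rho K_{123}K_{415}$, $\E_\rho K_{213}K_{451}$ entering the probabilistic formula for $\sigma_S^2$ (use the standard coupling $Y_i=\rho X_i+\sqrt{1-\rho^2}Z_i$ and dominated convergence), the value of the formula at $\rho=1$ equals $9\var_1 g_S(V_1)$, with the expectation taken under the degenerate law $Y_i=X_i$.

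\textbf{Step 2 (degenerate evaluation).} Under $\rho=1$, each $K_{ijk}=\ind{X_j<X_i}\ind{X_k<X_i}$ is symmetric in $j$ and $k$, so the six terms in \eqref{eq:hS} pair up: $K_{ijk}+K_{ikj}=2\ind{X_i=\max(X_i,X_j,X_k)}$, and similarly for the pairs $\{K_{jik},K_{jki}\}$ and $\{K_{kij},K_{kji}\}$. Their total is therefore $2$ a.s., since exactly one of $X_i,X_j,X_k$ is the maximum. Hence $h_S=2\cdot2-3=1$ a.s., $g_S(V_1)=\E[h_S\mid V_1]=1$ a.s., $\var_1 g_S(V_1)=0$, and consequently $\sigma_S^2(1-)=0$.

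The only delicate point is the continuity invoked in Step 1, and it is mild: one could bypass it entirely by evaluating the five probabilities in the probabilistic formula directly at $\rho=1$ using the uniform rank distribution (for instance $\E_1 K_{123}K_{145}=\P(X_1=\max\{X_1,\dots,X_5\})=\tfrac15$) and checking that the algebraic sum in the formula for $\sigma_S^2$ vanishes. The substantive content of the lemma is the combinatorial collapse $h_S\equiv1$ on the diagonal.
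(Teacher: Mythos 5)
Your proof is correct, and it shares with the paper's argument the key continuity step: both reduce $\si_S^2(1-)$ to the value of $9\var_\rho g_S(V_1)$ at the degenerate point $\rho=1$ via the coupling $Y_i=\rho X_i+\sqrt{1-\rho^2}Z_i$ and dominated convergence (the indicators defining $h_S$ converge a.s.\ off the null set $\bigcup_{i\ne j}\{X_i=X_j\}$). Where you diverge is in showing that the value at $\rho=1$ is zero. The paper argues at the level of the statistic: for $\rho=1$ one has $S=1$ a.s., so $\var_1 S=0$; since $\si_{S,n}^2$ is, up to a positive constant factor, the variance of the projection of $S$ onto the space of linear statistics, $\si^2_{S,n}(1)=0$ for every $n$, and $\si_S^2(1)=0$ follows by letting $n\to\infty$ (using $h_{S,n}-h_S=O(1/n)$). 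You argue at the level of the kernel: on the diagonal $Y_i=X_i$ the six $K$'s in \eqref{eq:hS} collapse pairwise to terms of the form $2\ind{X_i=\max(X_i,X_j,X_k)}$, whose total is $2$ a.s., so $h_S\equiv1$, $g_S(V_1)\equiv1$, and $\var_1 g_S(V_1)=0$ directly. Your route is more self-contained --- it needs neither the Hoeffding-projection interpretation of $\si_{S,n}^2$ nor the passage $n\to\infty$ --- and it exposes the combinatorial reason for the degeneracy; the paper's route avoids unpacking the kernel altogether, exploiting only the obvious fact that Spearman's statistic equals $1$ identically when $Y=X$. One small caveat: your closing remark that evaluating the five probabilities at $\rho=1$ would let one ``bypass'' the continuity step is not quite right, since $\si_S^2(1-)$ is a left limit and the value of the probabilistic expression \emph{at} $\rho=1$ says nothing about that limit without the very continuity in question (or the continuity of \eqref{eq:siS}, which would instead require evaluating the $I_j$ at $1$); but that remark is not load-bearing, and your Steps 1 and 2 stand on their own.
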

\begin{proof}
W.l.o.g.,\ $Y_i=\rho X_i+\sqrt{1-\rho^2}Z_i$ for all $i$, where the $Z_i$'s are i.i.d. $N(0,1)$ r.v.'s independent of the $X_i$'s. 
Further note that $\si_{S,n}^2(\rho)$ differs only by a positive constant factor from $\var_\rho\operatorname{proj}_{\mathcal L}S$, where $\mathcal L$ is the space of all linear statistics.  
Also, for $\rho=1$, one has $S=1$ a.s.\ and hence $\var_\rho\operatorname{proj}_{\mathcal L}S\le\var_\rho S=0$, so $\si^2_{S,n}(1)=0$ for all $n$. 
Now, letting $n\to\infty$, one has $\si^2_S(1)=0$, since $h_{S,n}-h_S=O(1/n)$.

Next, $\frac19\si^2_S(\rho)=\var_\rho g_S(V_1)=\E_\rho h_S(W_1,W_2,W_3)h_S(W_1,W_4,W_5)-\E^2_\rho h_S(W_1,W_2,W_3)$, with $W_i:= W_i(\rho):=(X_i,\rho X_i +\sqrt{1-\rho^2}Z_i)$. 
Next, $h_S(W_1,W_2,W_3)$ and $h_S(W_1,W_4,W_5)$ are continuous in $\rho$ on the complement of the union of all events of the form $\{X_i=X_j\}$ for $i\ne j$. 
The latter union has zero probability. 
So, by dominated convergence, $\si^2_S(\rho)\to\si^2_S(1)=0$ as $\rho\uparrow 1$. 
\end{proof}

While the result of this last lemma should not be surprising, it should be noted that trying to assert $\si_S^2(1-)=0$ using only the expression \eqref{eq:siS} is a more difficult task.

\subsection{Proofs of monotonicity}\label{sec:proof.mon}
As in \cite{pin01,pin02mon,pin06}, let $-\infty\le a<b\le\infty$, and suppose that $f$ and $g$ are differentiable functions on $(a,b)$. 
Let $r:=\frac fg$ and $\rho:=\frac{f'}{g'}$; 
from hereon, the symbol $\rho$ should not be considered the correlation of a bivariate normal population, which latter will be denoted by $x$. 
Assume that either $g<0$ or $g>0$ on $(a,b)$, and also that $g'<0$ or $g'>0$ on $(a,b)$. 
For an arbitrary function $h$ defined on $(a,b)$, adopt the notation ``$h$ $\inc$'' to mean $h$ is (strictly) increasing on $(a,b)$ and similarly let ``$h$ $\dec$'' mean $h$ is decreasing on $(a,b)$; 
the juxtaposition of these arrows shall have the obvious meaning, e.g.~``$h$ $\inc\,\dec$'' means that there exists some $c\in(a,b)$ such that $h$ $\inc$ on $(a,c)$ and $h$ $\dec$ on $(c,b)$. 
Further, let the notation ``$h$ is $+-$'' mean that there exists $c\in(a,b)$ such that $h>0$ on $(a,c)$ and $h<0$ on $(c,b)$; 
similar meaning will be given to other such strings composed of alternating ``$+$'' and ``$-$'' symbols.

\begin{thmA}\label{thm:special}\ \\
Suppose that either $f(a+)=g(a+)=0$ or $f(b-)=g(b-)=0$.
\begin{enumerate}[(i)]
\item If $\rho$ $\inc$ on $(a,b)$, then $r'>0$ on $(a,b)$ and hence $r$ $\inc$ on $(a,b)$;
\item If $\rho$ $\dec$ on $(a,b)$, then $r'<0$ on $(a,b)$ and hence $r$ $\dec$ on $(a,b)$.
\end{enumerate}
\end{thmA}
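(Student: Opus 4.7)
The strategy is to express $r'$ in a form where its sign is transparent, then to pin down the two sign factors separately. Differentiating gives
\begin{equation*}
r'(x)=\frac{f'(x)g(x)-f(x)g'(x)}{g(x)^{2}}=\frac{g'(x)}{g(x)}\,\bigl(\rho(x)-r(x)\bigr),
\end{equation*}
so the sign of $r'(x)$ is the product of the signs of $g'/g$ and of $\rho-r$ at $x\in(a,b)$. Since $g$ and $g'$ each have constant sign by hypothesis, applying the mean value theorem to $g(x)-g(y)$ and letting $y$ approach the endpoint at which $g$ vanishes shows that $g$ and $g'$ share a common sign when $g(a+)=0$ (so $g'/g>0$) and have opposite signs when $g(b-)=0$ (so $g'/g<0$).

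To compare $r(x)$ with $\rho(x)$, fix $x\in(a,b)$ and introduce the auxiliary function
\begin{equation*}
\phi_x(y):= f(y)-\rho(x)\,g(y),\qquad\phi_x'(y)=g'(y)\bigl(\rho(y)-\rho(x)\bigr).
\end{equation*}
Under the hypothesis $\rho\inc$ on $(a,b)$, the factor $\rho(y)-\rho(x)$ has a definite sign on each of the sides $y<x$ and $y>x$, so $\phi_x$ is strictly monotone on whichever side contains the vanishing endpoint. Since $\phi_x(a+)=0$ in the first case and $\phi_x(b-)=0$ in the second (both inherited from $f$ and $g$), this monotonicity forces $\phi_x(x)$ to be strictly of one sign; dividing by $g(x)$ and tracking the sign of $g$ yields $r(x)<\rho(x)$ when $f(a+)=g(a+)=0$ and $r(x)>\rho(x)$ when $f(b-)=g(b-)=0$. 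In each case the signs of $g'/g$ and of $\rho-r$ coincide, hence $r'>0$ on $(a,b)$, proving (i). Part (ii) follows by reversing all strict inequalities throughout.

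The main obstacle is not analytic but organizational: one must verify the above sign alignment for all four combinations of vanishing endpoint $\in\{a,b\}$ and constant sign of $g\in\{+,-\}$. Once the single monotonicity-of-$\phi_x$ calculation is set up, each sub-case collapses to reading an inequality off a four-row table, so the argument terminates quickly. It is worth noting that the auxiliary-function device is exactly the usual proof of Cauchy's mean value theorem, adapted so as not to require the endpoint values to be attained.
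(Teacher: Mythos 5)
Your argument is correct: the identity $r'=\frac{g'}{g}(\rho-r)$, the endpoint argument pinning the sign of $g'/g$, and the Cauchy-MVT-type auxiliary function $\phi_x$ fixing the sign of $\rho-r$ all check out, including the strictness (since $g'\ne0$ and $\rho$ is strictly monotone, $\phi_x$ is strictly monotone up to the vanishing endpoint). The paper itself does not prove Theorem A but defers to \cite[Proposition 1.1]{pin02mon}, whose proof is essentially this same classical argument, so no further comparison is needed.
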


\begin{thmB}\label{thm:general}\ 
\begin{enumerate}[(i)]
\item If $\rho$ $\inc$ and $gg'>0$ on $(a,b)$, then $r$ $\dec$, $r$ $\inc$ or $r$ $\dec\,\inc$ on $(a,b)$;
\item If $\rho$ $\inc$ and $gg'<0$ on $(a,b)$, then $r$ $\dec$, $r$ $\inc$ or $r$ $\inc\,\dec$ on $(a,b)$;
\item If $\rho$ $\dec$ and $gg'>0$ on $(a,b)$, then $r$ $\dec$, $r$ $\inc$ or $r$ $\inc\,\dec$ on $(a,b)$;
\item If $\rho$ $\dec$ and $gg'<0$ on $(a,b)$, then $r$ $\dec$, $r$ $\inc$ or $r$ $\dec\,\inc$ on $(a,b)$.
\end{enumerate}
\end{thmB}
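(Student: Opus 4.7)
Everything will rest on the identity
\begin{equation*}
r'(x) = \frac{g'(x)}{g(x)}\bigl(\rho(x)-r(x)\bigr),\qquad x\in(a,b),
\end{equation*}
which is just the quotient rule rewritten via $f'=\rho g'$. Since $g$ and $g'$ each have constant sign on $(a,b)$, so does $g'/g$, with the sign being positive precisely when $gg'>0$. In particular the zero set of $r'$ coincides exactly with the coincidence set $\{\rho=r\}$, and on either side of such a zero the sign of $r'$ is determined by the sign of $\rho-r$ together with the fixed sign of $g'/g$.

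The idea is to show that in each of the four cases $r$ cannot exhibit one of the two kinds of interior strict local extremum; this, together with the absence of flat subintervals, will rule out every monotonicity pattern except the three listed. I would write out case~(i) in full and note that the remaining three cases reduce to it by obvious sign swaps. So assume $\rho\inc$ and $gg'>0$, and suppose, toward contradiction, that $r$ has a strict local maximum at some $x_0\in(a,b)$. Then $r'(x_0)=0$, so the identity forces $\rho(x_0)=r(x_0)$. For any $x\in(x_0,x_0+\delta)$ with $\delta>0$ small one has
\begin{equation*}
r(x)<r(x_0)=\rho(x_0)<\rho(x),
\end{equation*}
the last inequality because $\rho$ is strictly increasing; thus $\rho(x)-r(x)>0$, and since $g'/g>0$ the identity yields $r'(x)>0$ throughout $(x_0,x_0+\delta)$. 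This means $r$ is strictly increasing just to the right of $x_0$, contradicting $r(x)<r(x_0)$. Hence $r$ has no strict local maximum on $(a,b)$. Moreover, $r$ cannot be constant on any open subinterval, since there $r'\equiv 0$ would force $\rho\equiv r\equiv$ constant, violating $\rho\inc$. Together these two facts leave only the patterns $r\inc$, $r\dec$, or $r\dec\,\inc$, as claimed.

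Cases (ii), (iii), (iv) follow from the same template: in each instance one of the two possible strict local extrema is excluded by the analogous one-sided computation. Specifically, in cases (ii) and (iii) the excluded extremum is a strict local minimum (yielding $r\inc$, $r\dec$, or $r\inc\,\dec$), whereas in cases (i) and (iv) it is a strict local maximum. The correct one-sided neighborhood of $x_0$ on which to produce the contradiction is dictated in each case by the signs of $g'/g$ and $\rho'$: one needs the side on which the strict monotonicity of $\rho$ forces $\rho-r$ to take the sign that, against the fixed sign of $g'/g$, makes $r'$ strictly positive or strictly negative in a way incompatible with the assumed local extremum.

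\textbf{Main obstacle.} The argument itself is short, so the real work is sign bookkeeping: for each of the four combinations one must pair up correctly (i) the sign of $g'/g$, (ii) the side of $x_0$ on which strict monotonicity of $\rho$ pins down the sign of $\rho-r$, and (iii) which of ``strict local max'' or ``strict local min'' is thereby excluded. A small ancillary point is ruling out flat subintervals of $r$, which is immediate from strict monotonicity of $\rho$; this is also what promotes the three allowed patterns from weak to strict monotonicity.
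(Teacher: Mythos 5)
Your starting identity $r'=(g'/g)(\rho-r)$ is correct and is indeed the engine behind these rules, and your exclusion of a strict local maximum in case (i) (and the analogous extremum in the other cases) is sound. The gap is in the final inference: ``no strict local maximum and no flat subintervals'' does \emph{not} imply that a differentiable function is $\dec$, $\inc$ or $\dec\,\inc$. For a counterexample, let $K\subset[1/4,3/4]$ be a Cantor set (so $K$ has no isolated points), set $h=1$ on $K$, let $h$ dip smoothly below $1$ on each complementary gap $(p,q)$ to depth $(q-p)^2$ with vanishing derivative at $p$ and $q$, and let $h$ increase from $0$ to $1$ on $[0,1/4]$ and decrease from $1$ to $0$ on $[3/4,1]$. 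This $h$ is differentiable, is constant on no subinterval, and has no strict local maximum (no point of $K$ is isolated in $K$, and inside each gap the only interior critical point is a minimum), yet its monotonicity pattern is none of the three listed. So the two facts you establish are strictly weaker than the conclusion, and the last step of your argument does not follow.

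What is actually needed is the global statement that $r'$ changes sign at most once, in the order $-$ then $+$ for cases (i) and (iv) and $+$ then $-$ for (ii) and (iii); combined with the absence of flat subintervals this gives the trichotomy. In your framework this amounts to showing, e.g.\ in case (i), that there are no $x_1<x_2$ with $r'(x_1)>0>r'(x_2)$. Your local computation can be upgraded to prove this: assuming such $x_1<x_2$ exist, set $c:=\sup\{x\in[x_1,x_2]:\rho(x)>r(x)\}$, note that $r'\le0$ and hence $r$ is non-increasing on $(c,x_2]$ while $\rho$ increases past $\rho(c)\ge r(c)$ (the latter inequality obtained from a sequence $x_n\uparrow c$ with $\rho(x_n)>r(x_n)$ and continuity of $r$), so that $\rho>r$ and hence $r'>0$ just to the right of $c$, contradicting the definition of $c$. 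The paper itself does not reprove Theorem~B but cites \cite{pin01}; the argument there, reflected in Theorem~C above, instead shows that $\tilde\rho=g^2r'/|g'|$ is monotone, so that $r'$ inherits the single sign change from the at-most-one zero of a monotone function. Either repair closes the gap; as written, your proof does not.
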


\begin{thmC}\label{thm:refined}\ \\
Let $\tilde\rho:=g^2\,\frac{r'}{|g'|}=\sign(g')(\rho g-f)$. 
\begin{enumerate}[(i)]
\item If $\rho$ $\inc$ and $gg'>0$ on $(a,b)$, then $\tilde\rho$ $\inc$;
\item If $\rho$ $\inc$ and $gg'<0$ on $(a,b)$, then $\tilde\rho$ $\dec$;
\item If $\rho$ $\dec$ and $gg'>0$ on $(a,b)$, then $\tilde\rho$ $\dec$;
\item If $\rho$ $\dec$ and $gg'<0$ on $(a,b)$, then $\tilde\rho$ $\inc$.
\end{enumerate}
In addition, $\sign(\tilde\rho)=\sign(r')$, so that the monotonicity pattern of $r$ may be determined by the monotonicity of $\tilde\rho$ and knowledge of the signs of $\tilde\rho(a+)$ and/or $\tilde\rho(b-)$.
\end{thmC}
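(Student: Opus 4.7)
The plan is to first verify that the two given expressions for $\tilde\rho$ agree and that $\sign(\tilde\rho)=\sign(r')$; then to obtain a clean formula for $\tilde\rho'$ and read off the four cases directly. Starting from $r'=(f'g-fg')/g^2$, multiplying by $g^2/|g'|$ yields $g^2 r'/|g'|=(f'g-fg')/|g'|$. Substituting $\rho=f'/g'$ on the other side gives $\sign(g')(\rho g-f)=\sign(g')(f'g-fg')/g'=(f'g-fg')/|g'|$ as well, so the two forms of $\tilde\rho$ coincide. Since $g^2>0$ and $|g'|>0$ on $(a,b)$, the first form also gives $\sign(\tilde\rho)=\sign(r')$ immediately, which is the final assertion of the theorem.

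For the four monotonicity claims, I would work with the form $\tilde\rho=\sign(g')(\rho g-f)$. The point of this representation is that $\sign(g')$ is constant on $(a,b)$ by hypothesis, so $\tilde\rho$ is (up to a fixed sign) just $\rho g-f$. Assuming for the moment that $\rho$ is differentiable, the product rule and the identity $f'=\rho g'$ give the telescoping
\begin{equation*}
\tilde\rho'=\sign(g')\bigl(\rho' g+\rho g'-f'\bigr)=\sign(g')\,\rho'\,g.
\end{equation*}
Consequently $\sign(\tilde\rho')=\sign(\rho')\,\sign(gg')$, and the four sign combinations of $\rho'$ and $gg'$ are exactly the four cases (i)--(iv) of the theorem.

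The one place where care is required is that monotonicity of $\rho$ is assumed but differentiability of $\rho$ is not; this is the main (minor) obstacle. I would resolve it either by noting that monotone functions are differentiable a.e.\ and then applying a standard result that a continuous function whose derivative has a fixed strict sign a.e.\ is strictly monotone, or, more elegantly, by replacing the differentiation step with Cauchy's mean value theorem: for $a<s<t<b$, choose $\xi\in(s,t)$ with $\rho(\xi)=(f(t)-f(s))/(g(t)-g(s))$, and then algebra yields
\begin{equation*}
\tilde\rho(t)-\tilde\rho(s)=\sign(g')\bigl[g(t)(\rho(t)-\rho(\xi))+g(s)(\rho(\xi)-\rho(s))\bigr].
\end{equation*}
Because $g$ has constant sign on $(a,b)$, both $g(s)$ and $g(t)$ share that sign, while the two $\rho$-differences share the sign of $\rho'$ since $s<\xi<t$. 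The bracket therefore has sign $\sign(g)\,\sign(\rho')$, and multiplying by $\sign(g')$ gives $\sign(\tilde\rho(t)-\tilde\rho(s))=\sign(gg')\,\sign(\rho')$, reproducing the same case analysis as the differentiation argument and completing the proof.
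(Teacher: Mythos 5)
Your proposal is correct. Note that the paper itself does not prove Theorem~C: it only cites \cite[Lemma 2.1]{pin06}, where the argument is essentially the telescoping computation you give first, namely $\bigl(\sign(g')(\rho g-f)\bigr)'=\sign(g')\,\rho'g$ via $f'=\rho g'$, together with the observation that $\sign(\tilde\rho)=\sign(r')$ because $g^2/|g'|>0$. Your verification that the two expressions for $\tilde\rho$ coincide and your reading of the four sign cases from $\sign(\tilde\rho')=\sign(\rho')\sign(gg')$ match that route. What you add is a genuinely worthwhile refinement: since $\rho=f'/g'$ need not be differentiable (only $f$ and $g$ are assumed differentiable), you replace the differentiation step by Cauchy's mean value theorem, writing $\tilde\rho(t)-\tilde\rho(s)=\sign(g')\bigl[g(t)(\rho(t)-\rho(\xi))+g(s)(\rho(\xi)-\rho(s))\bigr]$ with $s<\xi<t$; this identity is correct (Cauchy's theorem applies because $g'$ never vanishes, so $g(t)\ne g(s)$), both $\rho$-differences are strictly of the sign dictated by the monotonicity of $\rho$, and $g(s),g(t)$ share the constant sign of $g$, which yields strict monotonicity of $\tilde\rho$ in all four cases without any almost-everywhere argument. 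The only cosmetic remark is that in your first (differentiation) computation, strict monotonicity of $\rho$ gives only $\rho'\ge0$ with no interval of vanishing, not $\rho'>0$ pointwise; but since you discard that route in favor of the mean-value argument, nothing is lost.
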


E.g.\ suppose it can be established that $\rho$ $\inc$ and $gg'>0$ on $(a,b)$; 
if one also knows that $r(a+)=-\infty$ then the general rules imply $r$ $\inc$. 
Alternatively, $\rho$ $\inc$ and $gg'>0$ imply $\tilde\rho$ $\inc$; 
if it can be established that $\tilde\rho(a+)\ge0$, then $\tilde\rho>0$ on $(a,b)$ and hence $r$ $\inc$ on $(a,b)$. 
We shall make frequent use of these rules throughout the proof of Theorem~\ref{thm:quad}. 
The special-case rules are proved in \cite[Proposition 1.1]{pin02mon}, and a proof of the general rules is found in \cite[Proposition 1.9]{pin01}. 
A proof of the refined general rules, along with several other variants of these monotonicity rules, is found in \cite[Lemma 2.1]{pin06}. 
Note that Anderson et al.~\cite[Lemma 2.2]{and93} proved a variant of the special-case rules, wherein the function $\frac{f(x)-f(a)}{g(x)-g(a)}$ $\inc$ (or $\dec$) whenever $\rho$ $\inc$ (or $\dec$).

That \eqref{eq:are} may be used to express any of the three pairwise $\are$'s has been justified by the work of the previous section. 
The proofs of the six statements (RT0)~--~(RS1) in Theorem~\ref{thm:quad} will follow the same general method. 
Fix an arbitrary $a\in[0,1]$, and let 
\begin{equation}\label{eq:b,c}
b:=b(a):=\are(a)\quad\text{and}\quad c:=c(a):=\are'(a). 
\end{equation}
Then
\begin{equation*}
q_a(x)=\frac{\are(x)-b-c(x-a)}{(x-a)^2}=\frac{f(x)-bg(x)-c(x-a)g(x)}{(x-a)^2g(x)}
\end{equation*}
when $f$ and $g$ are functions chosen so that $\are=\frac fg$. 
Accordingly, let
\begin{equation}
\label{eq:scheme}
\begin{split}
&
f_0(x):= f(x)-bg(x)-c(x-a)g(x),\quad
g_0(x):= (x-a)^2g(x),\quad
r_0(x):=\frac{f_0(x)}{g_0(x)}=q_a(x),
\\
&
f_i:= a_if_{i-1}',\quad
g_i:= a_ig_{i-1}',\quad
r_i:=\frac{f_i}{g_i},\quad
\rho_{i-1}:=\frac{f_{i-1}'}{g_{i-1}'}=r_i,
\text{ and }
\tilde\rho_i=\sign(g_{i+1})\bigl(r_{i+1}g_i-f_i\bigr)
\end{split}
\end{equation}
where the $a_i$ are positive on $(0,1)$. 
There is some freedom in choosing the functions $a_i$, though the goal is to ensure that, for some natural number $n\ge1$, the ratio $r_n$ is an algebraic function. 
In our case it will turn out that $r_n$ is actually an algebraic function independent of the value of $a$. 
As $r_n$ is algebraic, the problem of determinining its monotonicity pattern on an interval is completely algorithmic (cf.~\cite{tarski48,collins98}); 
here, we use the Mathematica \texttt{Reduce} command to deduce the monotonicity of $r_n=\rho_{n-1}$. 
The specific choices of $f$, $g$ and the $a_i$ are given in Lemmas~\ref{lem:q_RT}~--~\ref{lem:q_RS} below. 
One may refer to this first phase of the proof as the ``reduction'' phase.

Once the monotonicity of $r_n=\rho_{n-1}$ is established, the second and final stage of the proof is to ``work backwards'' by using the various l'Hospital-type rules stated above to deduce the monotonicity patterns of $r_{n-1}=\rho_{n-2}$, $r_{n-2}=\rho_{n-3},\dotsc$, $r_1=\rho_0$, $r_0=q_a$. 
Throughout the proof, all functions shall be assumed to be defined on $(0,1)$ unless otherwise stated. 

As most of the functions being treated are rather unwieldy, all calculations are performed with the Mathematica (v.~5.2 or later) software; 
detailed output from the notebooks has been reproduced as the appendices, and the actual notebooks will be made available upon request. 
Each of the appendices~RT, TS, and RS follows the same general format: the first section (labeled RTr, TSr, or RSr -- where ``r'' stands for ``reduction (phase)'') is dedicated to proving the corresponding one of the Lemmas \ref{lem:q_RT}--\ref{lem:q_RS} below (i.e., the ``reduction'' stage of the proofs), the second section (RT0, TS0, or RS0) provides numerical support for proving the monotonicity of $q_0$, and the third section (RT1, TS1, or RS1) provides support for proving the monotonicity of $q_1$. 

We prove $q_a$ is increasing only for $a\in\{0,1\}$; 
the following three lemmas could perhaps be used as starting points for the ``working backwards'' phase for other choices of $a\in(0,1)$ to get even more quadratic bounds on the $\are$'s (cf.\ Corollary~\ref{cor:bounds}). 
It is of course desirable to demonstrate that $q_a\,\inc$ for arbitrary $a\in[0,1]$ (should this be true), though a proof of such a statement has yet to be found; 
for any given $a\in(0,1)$, this second phase of the proof is restricted only by computational capacities, since, as mentioned above, the expression for $r_n$ is eventually algebraic. 
We remark also that this method could conceivably be adapted (by using an appropriate variant of the definition of $q_a$) to finding quadratic bounds on $\are_{T,R}=1/\are_{R,T}$, $\are_{S,T}=1/\are_{T,S}$, and $\are_{S,R}=1/\are_{R,S}$, or possibly finding approximating polynomials of degree greater than 2. 

\begin{lem}\label{lem:q_RT}
Let $a\in[0,1]$ be arbitrary, and let
\begin{equation*}
f(x):=\pi^2-36\bigl(\asin\tfrac x2\bigr)^2\quad\text{and}\quad
g(x):=9\bigl(1-x^2\bigr)
\end{equation*}
for $x\in(0,1)$. Then on the interval $(0,1)$, one has $\are_{R,T}=\frac fg$, $r_4$ $\inc$, $f_4<0$ and $g_4<0$, where
\begin{gather*}
a_1(x):=\sqrt{4-x^2},\quad
a_2(x):=\frac{\sqrt{4-x^2}}{2-x^2},\quad
a_3(x):=\frac{(2-x^2)^2}{50-29x^2+9x^4},\quad
a_4(x):=\frac{(50-29x^2+9x^4)^2}{2-x^2},
\end{gather*}
and $f_i$, $g_i$, $r_i$ are as defined in \eqref{eq:scheme}.
\end{lem}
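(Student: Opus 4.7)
My plan has two phases: a symbolic reduction that eliminates the transcendental $\asin$ factor from $q_a$, and an algorithmic verification on the resulting algebraic ratio $r_4$.

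First I would verify $\are_{R,T}=f/g$. Substituting $\mu_R(x)=x$ and $\sigma_R^2(x)=(1-x^2)^2$ from \eqref{eq:muR} and \eqref{eq:siR}, and $\mu_T(x)=\tfrac{2}{\pi}\asin x$ and $\sigma_T^2(x)=\tfrac{4}{9}-\tfrac{16}{\pi^2}(\asin(x/2))^2$ from \eqref{eq:muT} and \eqref{eq:siT}, into the ARE formula \eqref{eq:are}, and using $\mu_T'(x)^2=4/[\pi^2(1-x^2)]$, a short simplification yields $\are_{R,T}(x)=f(x)/g(x)$ with $f$ and $g$ as stated.

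Next I would unwind the recursion \eqref{eq:scheme} by direct symbolic computation. Note that $f_0=f-bg-c(x-a)g$ is a linear combination of $1$, $\asin(x/2)$, and $(\asin(x/2))^2$ with polynomial coefficients, while $g_0=(x-a)^2g$ is purely polynomial. Each $a_i$ is engineered to clear the $\sqrt{4-x^2}$ produced when differentiating $\asin(x/2)$ and to absorb the polynomial denominators arising from the quotient rule: $a_1=\sqrt{4-x^2}$ rationalizes the first derivative; $a_2$ simultaneously rationalizes the next derivative and strips the denominator $(2-x^2)$ appearing there; and $a_3$, $a_4$ perform the analogous bookkeeping in the remaining two steps, introducing and then removing the factor $50-29x^2+9x^4$ that emerges along the way. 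After four rounds, symbolic simplification reveals that every $\asin(x/2)$ has cancelled, so that $r_4$ is rational in $x$ (and in fact, as promised in the scheme discussion preceding the lemma, independent of $a$).

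Since $r_4$, $f_4$, and $g_4$ are then algebraic, the claims $r_4\inc$, $f_4<0$, $g_4<0$ on $(0,1)$ reduce to univariate polynomial inequalities that are decidable by real quantifier elimination (cf.\ \cite{tarski48,collins98}); I would dispatch them with Mathematica's \texttt{Reduce} command, with the symbolic output collected in Appendix RTr. The main obstacle is expression swell: each differentiation-plus-multiplication step roughly doubles polynomial degree and coefficient size, so by $i=4$ the intermediate expressions are unwieldy, and the whole scheme succeeds only because the $a_i$'s cancel precisely the right irrational and polynomial factors at each stage, keeping $r_4$ tractable for the decision procedure.
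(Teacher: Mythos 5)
Your proposal is correct and follows essentially the same route as the paper: verify $\are_{R,T}=f/g$ from \eqref{eq:are}, \eqref{eq:siR}, \eqref{eq:muR}, \eqref{eq:muT}, and \eqref{eq:siT}, then observe that the $a_i$ clear the $\asin(\frac x2)$ and $\sqrt{4-x^2}$ factors so that $f_4$, $g_4$, $r_4$ are algebraic, and certify $f_4<0$, $g_4<0$, $r_4'>0$ on $(0,1)$ by symbolic computation (Appendix RTr, Mathematica \texttt{Reduce}). The only step you leave implicit is the (trivial) check that each $a_i>0$ on $(0,1)$, which the scheme \eqref{eq:scheme} requires and the paper notes by inspection.
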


\begin{proof}
From \eqref{eq:muR} and \eqref{eq:muT}, $\mu_R'(x)=1$ and $\mu_T'(x)=\frac2\pi(1-x^2)^{-1/2}$; then $\are_{R,T}=\frac fg$ upon recalling \eqref{eq:are}, \eqref{eq:siR}, and \eqref{eq:siT}. 
Visual inspection shows that $a_i>0$ on $(0,1)$ for $i=1,\dotsc,4$. 
It is easily verified that $f_4<0$, $g_4<0$ and $r_4'>0$ on $(0,1)$; see Appendix~RTr for explicit expressions of these and the intermediate functions.
\end{proof}

\begin{lem}\label{lem:q_TS}
Let $a\in[0,1]$ be arbitrary, and let
\begin{equation}\label{eq:f,g}
f(x):=\si_S^2(x)\quad\text{and}\quad
g(x):=\frac{4(1-x^2)(\pi^2-36(\asin\frac x2)^2)}{\pi^2(4-x^2)},
\end{equation}
where $\si_S^2$ is given in \eqref{eq:siS}. Then on the interval $(0,1)$, one has $\are_{T,S}=\frac fg$, $r_{10}$ $\inc$, $f_{10}>0$ and $g_{10}>0$, where
\begin{gather*}
a_1(x):=\sqrt{4-x^2},\quad
a_2(x):=\frac{(4-x^2)^{5/2}}{2+x^2},\quad
a_3(x):=\frac{(2+x^2)^2}{(4-x^2)(38-17x^2-3x^4)},
\end{gather*}
and $a_4,\dotsc,a_{10}$ are functions rational in $x$ and $\sqrt{4-x^2}$, which are positive and continuous on $(0,1)$, with $f_i$, $g_i$, and $r_i$ as defined in \eqref{eq:scheme}.
\end{lem}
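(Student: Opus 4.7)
The plan is to mirror the reduction-plus-computer-algebra strategy of Lemma~\ref{lem:q_RT}, now carried out in ten steps rather than four. First, verify that $\are_{T,S}(x)=f(x)/g(x)$: differentiating \eqref{eq:muT} and \eqref{eq:muS} gives $\mu_T'(x)=2/(\pi\sqrt{1-x^2})$ and $\mu_S'(x)=6/(\pi\sqrt{4-x^2})$, and \eqref{eq:siT} may be rewritten as $\sigma_T^2(x)=\frac{4}{9\pi^2}(\pi^2-36(\asin(x/2))^2)$. Substituting these together with \eqref{eq:siS} into \eqref{eq:are} and cancelling yields
\begin{equation*}
\are_{T,S}(x)=\frac{\sigma_S^2(x)}{\sigma_T^2(x)}\cdot\frac{\mu_T'(x)^2}{\mu_S'(x)^2}
=\frac{\sigma_S^2(x)\,\pi^2(4-x^2)}{4(1-x^2)(\pi^2-36(\asin(x/2))^2)}
=\frac{f(x)}{g(x)},
\end{equation*}
which reduces the lemma to the claims about $r_{10}$, $f_{10}$, and $g_{10}$.

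The reduction phase then proceeds by iteratively applying \eqref{eq:scheme}, starting from $f_0, g_0$ built out of $f, g$ via \eqref{eq:b,c}. Each $a_i$ is chosen so that multiplying by it and differentiating strips away one arcsine or one radical. The explicit factors $a_1, a_2, a_3$ handle the radical $\sqrt{4-x^2}$ coming from $\mu_S'$ and the shared $(1-x^2)/(4-x^2)$ and $(\asin(x/2))^2$ structure of $f$ and $g$; their positivity on $(0,1)$ is immediate. The subtler factors $a_4,\ldots,a_{10}$ must be constructed explicitly: because $\sigma_S^2$ contains the four integrals $I_1,\ldots,I_4$, each with integrand of the form $\asin(h_j(x))/\sqrt{4-x^2}$ for a rational $h_j$, a single differentiation turns one $I_j$ into an arcsine divided by $\sqrt{4-x^2}$, and a second differentiation, preceded by multiplication by a suitable rational-in-$(x,\sqrt{4-x^2})$ factor, converts that arcsine into a function algebraic in $x$ and $\sqrt{4-x^2}$. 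After ten such steps, $r_{10}$ becomes algebraic in $x$ and $\sqrt{4-x^2}$, at which point the statements $r_{10}'>0$, $f_{10}>0$, and $g_{10}>0$ on $(0,1)$ are decidable in the first-order theory of the reals and may be certified by Mathematica's \texttt{Reduce}, as documented in Appendix~TSr. Positivity and continuity of each intermediate $a_i$ likewise reduce to routine real-algebraic checks.

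The main obstacle is the explicit design of $a_4,\ldots,a_{10}$: the lemma asserts they exist with the stated rationality and positivity properties, but locating them is nontrivial, since each must simultaneously clear one arcsine, preserve positivity on $(0,1)$, and leave the next derivative in a form amenable to the subsequent reduction, all while keeping the polynomial degrees small enough that the terminal \texttt{Reduce} call remains feasible. The need for ten reductions (versus four in Lemma~\ref{lem:q_RT}) is driven by the four arcsine integrands in $\sigma_S^2$ together with the extra $(\asin(x/2))^2$ term in $g$. Once the multipliers are in hand, the verification of $r_{10}$'s monotonicity and the signs of $f_{10}, g_{10}$ is mechanical symbolic computation.
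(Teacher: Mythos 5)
Your proposal matches the paper's approach: the paper's in-text proof of this lemma is a one-line deferral to Appendix~TSr, and your outline reconstructs exactly the intended argument --- the correct algebraic verification that $\are_{T,S}=f/g$ from \eqref{eq:are}, \eqref{eq:muT}, \eqref{eq:muS}, \eqref{eq:siT}, and \eqref{eq:siS}, followed by the ten-step reduction of \eqref{eq:scheme} terminating in an algebraic $r_{10}$ certified by \texttt{Reduce}. Like the paper, you leave the explicit multipliers $a_4,\dotsc,a_{10}$ and the terminal sign checks to the computational appendix, so nothing is missing relative to the paper's own treatment.
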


\begin{proof}
The proof is found in Appendix~TSr.
\end{proof}

\begin{lem}\label{lem:q_RS}
Let $a\in[0,1]$ be arbitrary, and let
\begin{equation*}
f(x):=\si_S^2(x)\quad\text{and}\quad
g(x):=\frac{36(1-x^2)^2}{\pi^2(4-x^2)},
\end{equation*}
where $\si_S^2$ is given in \eqref{eq:siS}. Then on the interval $(0,1)$, one has $\are_{R,S}=\frac fg$, $r_5$ $\inc$, $f_5>0$, and $g_5>0$, where
\begin{gather*}
a_1(x):=\sqrt{4-x^2},\quad
a_2(x):=(4-x^2)^{5/2},\quad
a_3(x):=\frac{1}{x(41-20x^2+3x^4)}, 
\end{gather*}
and $a_4,a_5$ are rational functions, which are positive and continuous on $(0,1)$, with $f_i$, $g_i$, and $r_i$ as defined in \eqref{eq:scheme}.
\end{lem}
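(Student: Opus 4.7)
The plan is to follow the same reduction-and-algorithmic-monotonicity scheme already used in Lemmas~\ref{lem:q_RT} and~\ref{lem:q_TS}; only the details of the calculations differ. Throughout, $f,g,f_i,g_i,r_i$ are as in \eqref{eq:scheme} with $f_0,g_0$ built from the specified $f$ and $g$ and from $b:=\are_{R,S}(a)$, $c:=\are'_{R,S}(a)$.

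First I would verify the identity $\are_{R,S}=f/g$. From \eqref{eq:muR} and \eqref{eq:muS} one has $\mu_R'(x)=1$ and $\mu_S'(x)=6/(\pi\sqrt{4-x^2})$, while \eqref{eq:siR} gives $\si_R^2(x)=(1-x^2)^2$. Substituting these into the ARE formula \eqref{eq:are} yields
\begin{equation*}
\are_{R,S}(x)=\frac{\si_S^2(x)}{(1-x^2)^2}\cdot\frac{1}{36/(\pi^2(4-x^2))}
=\frac{\si_S^2(x)\,\pi^2(4-x^2)}{36(1-x^2)^2}=\frac{f(x)}{g(x)},
\end{equation*}
so the ratio representation holds on $(0,1)$.

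Next I would run the reduction phase. Having fixed the multiplier $a_1(x)=\sqrt{4-x^2}$, differentiation of $f_0$ will involve the derivatives of $I_1,\ldots,I_4$ from \eqref{eq:siS}, each of which is of the form $\arcsin(\cdot)/\sqrt{4-x^2}$; multiplying by $a_1$ cancels the $\sqrt{4-x^2}$ in the denominators so that $f_1=a_1 f_0'$ becomes a sum of inverse sines with rational arguments (together with an explicit algebraic term coming from $b$ and $c$). Applying $a_2(x)=(4-x^2)^{5/2}$ and then differentiating kills the remaining arcsines by turning them into algebraic pieces: $\frac{d}{dx}\arcsin u=u'/\sqrt{1-u^2}$, and the specific $u$'s appearing in $I_1,\ldots,I_4$ are chosen so that $1-u^2$ factors nicely in terms of $4-x^2$. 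The third multiplier $a_3(x)=1/(x(41-20x^2+3x^4))$ is clearly positive on $(0,1)$ (at $x=0$ and $x=1$ the polynomial equals $41$ and $24$ respectively, and Descartes/Sturm easily rules out real roots in between); it is designed to cancel a common factor that appears after two rounds of differentiation. The rational functions $a_4,a_5$ are constructed analogously to strip off further common factors and to keep $f_i,g_i$ polynomial in manageable form; that they are positive and continuous on $(0,1)$ can be verified by inspection of their numerators and denominators, exactly as in the two preceding lemmas.

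After these five reduction steps, $r_5=f_5/g_5$ is an algebraic function whose dependence on $a$ is completely suppressed: the only $a$-dependent terms in $f_0$ are linear in $x$, so they vanish upon taking two derivatives, and thereafter all subsequent $f_i$ depend only on $x$. The monotonicity of $r_5$ on $(0,1)$, together with the positivity of $f_5$ and $g_5$, is then a purely algebraic assertion about polynomial (or $\sqrt{4-x^2}$-algebraic) inequalities on $(0,1)$, which I would certify by the Mathematica \texttt{Reduce} command, exactly as stated in the discussion preceding the lemmas; the full symbolic output would be deferred to Appendix~RSr.

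The main obstacle, as in Lemma~\ref{lem:q_TS}, is computational: the intermediate expressions $f_i,g_i$ grow rapidly in degree, and the multipliers $a_3,a_4,a_5$ must be chosen to keep them tractable and to preserve the sign structure. The compensating advantage over Lemma~\ref{lem:q_TS} is that $g(x)=36(1-x^2)^2/(\pi^2(4-x^2))$ is much simpler than the corresponding $g$ there, so I expect the chain to terminate at $n=5$ rather than $n=10$, which is consistent with the statement of the lemma.
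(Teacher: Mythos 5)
Your proposal takes essentially the same route as the paper: the paper's entire stated proof of this lemma is the single line ``The proof is given in Appendix~RSr,'' i.e., the identity $\are_{R,S}=f/g$ plus the symbolic reduction with \texttt{Reduce}-certified monotonicity of $r_5$ and positivity of $f_5,g_5$, exactly as you describe; your explicit verification of $\are_{R,S}=f/g$ from \eqref{eq:are}, \eqref{eq:siR}, \eqref{eq:muR} and \eqref{eq:muS} is correct.

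One side remark in your write-up is inaccurate, though it does not sink the (computer-certified) argument: the $a$-dependent part of $f_0$ is $-\bigl(b+c(x-a)\bigr)g(x)$, a linear polynomial \emph{times} $g$, not a linear polynomial, so it does not vanish after two differentiations --- e.g.\ $\bigl((b+c(x-a))g\bigr)''=2cg'+(b+c(x-a))g''\ne0$. The $a$-independence of $r_5$ is instead a consequence of the particular choice of the multipliers $a_1,\dotsc,a_5$, which must be arranged so that the composite operator $h\mapsto a_5\bigl(a_4(\dotsb(a_1h')'\dotsb)'\bigr)'$ annihilates the span of $g$ and $xg$; in the paper this is simply observed from the output of the computation (``it will turn out that $r_n$ is \dots independent of the value of $a$'') rather than derived from a two-derivative argument.
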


\begin{proof}
The proof is given in Appendix~RSr.
\end{proof}

Before proving Theorem~\ref{thm:quad}, recall the implications of \eqref{eq:scheme}. 
If on some open subinterval of $(0,1)$ one has $f_i>0$ (or $f_i<0$), then on this subinterval $f_{i-1}$ $\inc$ (or $f_{i-1}$ $\dec$), and similarly for the $g_i$'s. 
If $g_i$ has $k$ roots in $(0,1)$, these shall be denoted by $x_{i,j}$, $j=1,\dotsc,k$, with the assumption that $x_{i,1}<\dotsb<x_{i,k}$; 
if $g_i$ has only a single root in $(0,1)$, it will simply be denoted by $x_i$. 
Similarly, the roots of $f_i$ whenever they exist will be denoted by $y_{i,1},y_{i,2},\dotsc$ (or simply $y_i$ if $f_i$ has a single root), and if ever $r_i'$ is shown to have a root in $(0,1)$ (there will only be at most one root in what follows), this root will be denoted by $z_i$. 
Numerical approximations of any of these roots are not of direct concern to us, but rather their positions relative to other roots. 
Such information is easily obtained from evaluation of the respective functions at specific points; 
for instance, if at some step we deduce that $f_1$ and $g_1$ are both $+-$, with $f_1(0.5)>0>g_1(0.5)$, then it is inferred that $x_1<0.5<y_1$ (and further, that $r_1(x_1-)=\frac{f_1}{g_1}(x_1-)=\infty$ and $r_1(x_1+)=\frac{f_1}{g_1}(x_1+)=-\infty$).

\begin{proof}[Proof of Theorem~\ref{thm:quad}, (RT0)]
See Appendix~RT0 for more details of the following arguments. 
Adopt the notation of Lemma~\ref{lem:q_RT}, with $a=0$, so that, in accordance with \eqref{eq:b,c}, $b=\are_{R,T}(0)=\frac{\pi^2}9$ and $c=\are_{R,T}'(0)=0$. 
Noting that $f_3(0+)=g_3(0+)=0$, one has $f_3<0$, $g_3<0$ (since, by Lemma~\ref{lem:q_RT}, $f_4<0$ and $g_4<0$), and also, by the special-case rules, $\rho_2=r_3\,\inc$ (since, by Lemma~\ref{lem:q_RT}, $\rho_3=r_4\,\inc$).

Next, $g_2\,\dec$ (as $g_3<0$) and $g_2(0+)>0>g_2(1-)$ imply $g_2>0$ on $(0,x_2)$ and $g_2<0$ on $(x_2,1)$; 
similarly, $f_2\,\dec$ and $f_2(0+)>0>f_2(1-)$ imply $f_2>0$ on $(0,y_2)$ and $f_2<0$ on $(y_2,1)$. 
Verifying that $g_2(0.41)<0<f_2(0.41)$, one has $x_2<0.41<y_2$, further implying $r_2(x_2-)=\infty$ and $r_2(x_2+)=-\infty$. 
Noting the sign of $g_2g_2'$ (which is the sign of $g_2g_3$) on each of $(0,x_2)$ and $(x_2,1)$, the general rules imply $\rho_1=r_2\,\inc$ on each of these two intervals.

Next, $g_1\,\inc\,\dec$ on $(0,1)$ (as $g_2$ is $+-$) and $g_1(0+)=0>g_1(1-)$ imply the existence of a single root $x_1$, with $x_2<x_1$; 
similarly, $f_1\,\inc\,\dec$ and $f_1(0+)=0>f_1(1-)$ imply the existence of a single root $y_1$, with $y_2<y_1$. 
The special-case rules imply $r_1\,\inc$ on $(0,x_2)$ (as $f_1(0+)=g_1(0+)=0$). 
Further, $g_1(0.71)<0<f_1(0.71)$ implies $x_1<y_1$, which in turn shows $r_1(x_1-)=\infty$ and $r_1(x_1+)=-\infty$; 
noting the sign of $g_1g_1'$ on each of the intervals $(x_2,x_1)$ and $(x_1,1)$, the general rules imply $r_1\,\inc$ on these two intervals. 
The continuity of $r_1$ at $x_2$ implies $\rho_0=r_1\,\inc$ on $(0,x_1)$ and $(x_1,1)$.

Finally, $f_0(0+)=g_0(0+)=f_0(1-)=g_0(1-)=0$ imply both $g_0>0$ on $(0,1)$ (since $g_1$ is $+-$ and hence $g_0\,\inc\,\dec$ on $(0,1)$) and $r_0\,\inc$ on each of the intervals $(0,x_1)$ and $(x_1,1)$ (by the special-case rules); 
the continuity of $r_0$ at $x_1$ implies $q_{R,T;0}=r_0\,\inc$ on $(0,1)$. 
Further, the l'Hospital rule for limits implies $r_0(0+)=r_2(0+)$ and $r_0(1-)=r_1(1-)$.
\end{proof}

\begin{proof}[Proof of Theorem~\ref{thm:quad}, (RT1)]
See Appendix~RT1 for more details of the following arguments. 
Adopt the notation of Lemma~\ref{lem:q_RT}, with $a=1$, so that
\begin{gather*}
b=\are_{R,T}(1-)=\tfrac{f}{g}(1-)=\tfrac{f'(1-)}{g'(1-)}=\tfrac{2\pi}{3\sqrt3}
\intertext{and}
c=\are_{R,T}'(1-)=\tfrac{f'g-fg'}{g^2}(1-)=\tfrac{f''g-fg''}{2gg'}(1-)=\tfrac{f''(1-)g'(1-)-f'(1-)g''(1-)}{2g'(1-)^2},
\end{gather*}
which follows by repeated application of the l'Hospital rule for limits after noting $f(1-)=g(1-)=0$. 

Next, $g_3(0+)>0>g_3(1-)$ and $f_3(0+)>0>f_3(1-)$ along with $g_3\,\dec$ and $f_3\,\dec$ (since $f_4<0$ and $g_4<0$ by Lemma~\ref{lem:q_RT}) shows that $g_3$ and $f_3$ each have a single root $x_3$ and $y_3$, respectively. 
Also, $g_3(0.6)<0<f_3(0.6)$ shows $x_3<y_3$ and hence $r_3(x_3-)=\infty$ and $r_3(x_3+)=-\infty$. 
Noting the sign of $g_3g_3'$ on each of the intervals $(0,x_3)$ and $(x_3,1)$, the general rules imply $\rho_2=r_3\,\inc$ on these two intervals.

Next, $g_2\,\inc\,\dec$ (as $g_3$ is $+-$) and $g_2(0+)=g_2(1-)=0$ imply $g_2>0$, whereas $f_2\,\inc\,\dec$ and $f_2(0+)<0=f_2(1-)$ imply $f_2$ has a single root $y_2$. 
The special-case rules imply $r_2\,\inc$ on $(x_3,1)$; 
as $\rho_2\,\inc$ and $g_2g_2'>0$ on $(0,x_3)$ and $\tilde\rho_2(0+)>0$, the refined general rules imply $\tilde\rho_2>0$ and hence $r_2\,\inc$ on $(0,x_3)$. 
Noting that $r_2$ is continuous at $x_3$, one has $\rho_1=r_2\,\inc$ on $(0,1)$.

Next, $g_1\,\inc$ and $f_1(1-)=g_1(1-)=0$ imply both $g_1<0$ and $\rho_0=r_1\,\inc$ on $(0,1)$; 
similarly, $g_0\,\dec$ and $f_0(1-)=g_0(1-)=0$ imply $g_0>0$ and $q_{R,T;1}=r_0\,\inc$ on $(0,1)$. 
Lastly, $r_0(0+)=\frac{f_0(0+)}{g_0(0+)}$ and also $r_0(1-)=r_3(1-)$, which follows by the l'Hospital rule for limits.
\end{proof}

\begin{proof}[Proof of Theorem~\ref{thm:quad}, (TS0)]
See Appendix~TS0 for more details of the following arguments. 
Adopt the notation of Lemma~\ref{lem:q_TS}, with $a=0$, so that $b=\are_{T,S}(0)=1$ and $c=\are_{T,S}'(0)=0$. 
Now, $g_9\,\inc$, $f_9\,\inc$, and $f_9(0+)=g_9(0+)=0$ imply $f_9>0$, $g_9>0$, and $\rho_8=r_9\,\inc$ (using the results of Lemma~\ref{lem:q_TS} and the special-case rules) on $(0,1)$. 
Also, $g_8(1-)<0$, $f_8(0+)>0$, and $\tilde\rho_8(0+)<0$ imply $g_8<0$, $f_8>0$, and $\rho_7=r_8\,\dec$ (by the refined general rules) on $(0,1)$. 
Further, $f_7(0+)=g_7(0+)=0$ imply $f_7>0$, $g_7<0$, and $\rho_6=r_7\,\dec$ (again by the special-case rules) on $(0,1)$.

Next, $g_6\,\dec$ and $g_6(0+)>0>g_6(1-)$ imply the existence of a single root $x_6$; 
$f_6\,\inc$ and $f_6(0+)>0$ imply $f_6>0$ on $(0,1)$. 
The refined general rules imply $r_6\,\inc$ on $(0,x_6)$ (as $\tilde\rho_6(0+)>0$), and also that $\tilde\rho_6\,\dec$ on $(x_6,1)$. 
As $x_6<0.75$ (since $g_6(0.75)<0$), note that $\tilde\rho_6(x_6+)>\tilde\rho_6(0.75)>0>\tilde\rho_6(1-)$ implies $r_6\,\inc\,\dec$ on $(x_6,1)$. 
That is, $r_6'$ has a single root $z_6$, and hence we have $\rho_5=r_6\,\inc$ on each of $(0,x_6)$ and $(x_6,z_6)$ and $\dec$ on $(z_6,1)$.

Next, $g_5(0+)>0$ and $g_5(1-)>0$ (along with $g_5\,\inc\,\dec$) imply $g_5>0$ on $(0,1)$; 
also, $f_5(0+)>0$ implies $f_5>0$ on $(0,1)$. 
As $x_6>0.5$ (since $g_6(0.5)>0$) and $\tilde\rho_5\,\inc$ on $(0,x_6)$ (by the refined general rules), one has $\tilde\rho_5(0+)<0<\tilde\rho_5(0.5)<\tilde\rho_5(x_6+)$; 
that is, $r_5\,\dec\,\inc$ on $(0,x_6)$, or $r_5'$ has a single root $z_5$ (with $z_5<x_6$). 
Recall that $f_5$, $f_5'$ and $g_5$ are all positive on $(0,1)$, and also $g_5'<0$ on $(x_6,1)$. 
Then $r_5'=\frac{f_5'g_5-f_5g_5'}{g_5^2}>0$ and hence $r_5\,\inc$ on $(x_6,1)$. 
$\bigl($Let us remark at this point that the l'Hospital-type rules could, in principle, be used to establish the monotonicity of $r_5$ on each of $(x_6,z_6)$ and $(z_6,1)$; however, this would necessitate proving that $\tilde\rho_5(z_6)>0$, a task which requires more work than simply requesting the Mathematica program to evaluate the function at the \emph{approximation} of the root $z_6$.$\bigr)$ 
As $r_5$ is continuous on $(0,1)$, we have $\rho_4=r_5\,\dec$ on $(0,z_5)$ and $\inc$ on $(z_5,1)$. 

Next, $g_4(0+)=-\infty<0<g_4(1-)$ and $f_4(0+)=-\infty<0<f_4(1-)$ imply the existence of roots $x_4$ and $y_4$ (as $g_5>0$ and $f_5>0$). 
As $g_4(0.3)<0<r_5'(0.3)$, we see that $x_4>0.3>z_5$; 
the refined general rules imply $\tilde\rho_4\,\inc$ on $(0,z_5)$, and so, $\tilde\rho_4(0+)=0$ implies $r_4\,\inc$ on $(0,z_5)$. 
Also, $g_4(0.4)>0>f_4(0.4)$ implies $x_4<0.4<y_4$, so that $r_4(x_4-)=\infty$ and $r_4(x_4+)=-\infty$. 
The general rules then imply $r_4\,\inc$ on each of $(z_5,x_4)$ and $(x_4,1)$. 
Further, the continuity of $r_4$ at $z_5$ implies $\rho_3=r_4\,\inc$ on both $(0,x_4)$ and $(x_4,1)$.

Next, $g_3\,\dec\,\inc$ and $g_3(0+)=0<g_3(1-)$ imply the existence of a single root $x_3$; at that, $x_3>x_4$; 
similarly, $f_3(0+)=0<f_3(1-)$ implies the existence of $y_3$. 
The special-case rules imply $r_3\,\inc$ on $(0,x_4)$; 
$g_3(0.64)>0>f_3(0.64)$ implies $x_3<0.64<y_3$, or $r_3(x_3-)=\infty$ and $r_3(x_3+)=-\infty$, so that the general rules show that $r_3\,\inc$ on $(x_4,x_3)$ and $(x_3,1)$. 
As $r_3$ is continuous at $x_4$, one has $\rho_2=r_3\,\inc$ on $(0,x_3)$ and $(x_3,1)$.

Next, $g_2\,\dec\,\inc$, along with $g_2(0+)>0>g_2(0.5)$ and $g_2(1-)>0$, implies the existence of two roots $x_{2,1}$ and $x_{2,2}$; 
similarly, $f_2(0+)>0>f_2(0.5)$ and $f_2(1-)>0$ shows $f_2$ has two roots $y_{2,1},y_{2,2}$. 
Noting that $g_2(0.35)<0<f_2(0.35)$ and also $g_2(0.86)>0>f_2(0.86)$, we have $x_{2,1}<0.35<y_{2,1}<0.5<x_{2,2}<0.86<y_{2,2}$, whence $r_2(x_{2,1}-)=r_2(x_{2,2}-)=\infty$ and $r_2(x_{2,1}+)=r_2(x_{2,2}+)=-\infty$; 
the general rules then imply that $r_2\,\inc$ on each of $(0,x_{2,1})$, $(x_{2,1},x_3)$, $(x_3,x_{2,2})$ and $(x_{2,2},1)$. 
The continuity of $r_2$ at $x_3$ implies $\rho_1=r_2\,\inc$ on $(0,x_{2,1})$, $(x_{2,1},x_{2,2})$ and $(x_{2,2},1)$.

Next, $f_1(0+)=g_1(0+)=f_1(1-)=g_1(1-)=0$ (together with $f_2$ and $g_2$ both $+-+$) implies the existence of roots $x_1$ and $y_1$. 
That $r_1\,\inc$ on $(0,x_{2,1})$ and $(x_{2,2},1)$ is implied by the special-case rules; 
that $r_1\,\inc$ on $(x_{2,1},x_1)$ and $(x_1,x_{2,2})$ is implied by the general rules upon noting that $g_1(0.62)<0<f_1(0.62)$ (and hence $x_1<y_1$, or $r_1(x_1-)=\infty$ and $r_1(x_1+)=-\infty$). 
The continuity of $r_1$ at $x_{2,1}$ and $x_{2,2}$ implies $\rho_0=r_1\,\inc$ on $(0,x_1)$ and $(x_1,1)$.

Lastly, $f_0(0+)=g_0(0+)=f_0(1-)=g_0(1-)=0$ shows $g_0>0$ on $(0,1)$ and also, by the special-case rules, $r_0\,\inc$ on $(0,x_1)$ and $(x_1,1)$. 
The continuity of $r_0$ at $x_1$ shows $q_{T,S;0}=r_0\,\inc$ on $(0,1)$. 
Further, the l'Hospital rule for limits yields $r_0(0+)=r_2(0+)$ and $r_0(1-)=r_2(1-)$.

As promised in the remarks preceding Lemma~\ref{lem:siS(1)=0}, we show that $\si_S>0$ on $(0,1)$ (and hence on $(-1,0)$ as $\si_S$ is even). 
Note $f_0>0$ (as $f_0\,\inc\,\dec$ and $f_0(0+)=f_0(1-)=0$); 
by \eqref{eq:f,g} and \eqref{eq:scheme}, and recalling that $b=1$ and $c=0$, one has $f_0=\si_S^2-g$, so that $\si_S^2>g$ on $(0,1)$. 
As $x^2g(x)=g_0(x)>0$, it follows that $\si_S^2>0$. Further note that there is no circular reasoning here; the above proof stands on its own, regardless of any probabilistic interpretation we give to the functions $f$ or $g$.
\end{proof}

\begin{proof}[Proof of Theorem~\ref{thm:quad}, (TS1)]
See Appendix~TS1 for more details of the following arguments. 
Adopt the notation of Lemma~\ref{lem:q_TS}, with $a=1$, so that $f(1-)=g(1-)=f'(1-)=g'(1-)=0$, and repeated application of the l'Hospital rule for limits imply
\begin{equation}\label{eq:lHosp.b}
b=\are_{T,S}(1-)=\tfrac{f}{g}(1-)
  =\tfrac{f'}{g'}(1-)
  =\tfrac{f''(1-)}{g''(1-)}
  =\tfrac{99\sqrt{15}-135\sqrt3}{40\pi}
\end{equation}
and
\begin{equation}\label{eq:lHosp.c}
\begin{split}
c&=\are_{T,S}'(1-)=\tfrac{f'g-fg'}{g^2}(1-)
  =\tfrac{f''g-fg''}{2gg'}(1-)
  =\tfrac{f'''g+f''g'-f'g''-fg'''}{2(g')^2+2gg''}(1-)
\\
& =\tfrac{f^{(4)}g+2f'''g'-2f'g'''-fg^{(4)}}{6g'g''+2gg'''}(1-)
  =\tfrac{f'''(1-)g''(1-)-f''(1-)g'''(1-)}{3g''(1-)^2}. 
\end{split}
\end{equation}
Then $f_9(0+)=g_9(0+)=0$ (and $f_{10}>0$, $g_{10}>0$, by Lemma~\ref{lem:q_TS}) imply that $f_9>0$, $g_9>0$ and $\rho_8=r_9\,\inc$ (by the special-case rules). 
Also, $f_8(0+)>0$, $g_8(1-)<0$ and $\tilde\rho_8(0+)<0$ imply $f_8>0$, $g_8<0$, and (by the refined general rules) $\rho_7=r_8\,\dec$ on $(0,1)$. 

Next, $g_7(0+)>0>g_7(1-)$ implies the existence of a single root $x_7$; $f_7(0+)>0$ shows that $f_7>0$. 
The refined general rules imply $\tilde\rho_7\,\inc$ on $(0,x_7)$ and $\dec$ on $(x_7,1)$. 
As $\tilde\rho_7(0+)>0$, we see $r_7\,\inc$ on $(0,x_7)$; 
further, $x_7<0.2$ (implied by $g_7(0.2)<0$) yields $\tilde\rho_7(x_7+)>\tilde\rho_7(0.2)>0>\tilde\rho_7(1-)$, so that $r_7\,\inc\,\dec$ on $(x_7,1)$. 
That is, $\rho_6=r_7\,\inc$ on both of $(0,x_7)$ and $(x_7,z_7)$, and $\rho_6=r_7\,\dec$ on $(z_7,1)$.

Next, $g_6(0+)>0>g_6(1-)$ implies the existence of $x_6$; $f_6(0+)>0$ implies $f_6>0$ on $(0,1)$. 
As $\tilde\rho_6(0+)>0$, the refined general rules imply $r_6\,\inc$ on $(0,x_7)$. 
Further, $g_6(0.5)>0>r_7'(0.5)$ implies $z_7<0.5<x_6$; 
as $f_6>0$, $f_6'>0$, $g_6>0$, and $g_6'<0$ on the interval $(x_7,x_6)$, we have $r_6'=\frac{f_6'g_6-f_6g_6'}{g_6^2}>0$ and hence $r_6\,\inc$ on $(x_7,x_6)$, so that $r_6\,\inc$ on $(0,x_6)$ (since $r_6$ is continuous at $x_7$). 
Also, $\tilde\rho_6\,\dec$ on $(x_6,1)$ is implied by the refined general rules; 
then $g_6(0.85)<0$ implies $x_6<0.85$, so that $\tilde\rho_6(x_6+)>\tilde\rho_6(0.85)>0>\tilde\rho_6(1-)$ shows that $r_6\,\inc\dec$ on $(x_6,1)$. 
That is, $\rho_5=r_6\,\inc$ on $(0,x_6)$ and $(x_6,z_6)$ and $\dec$ on $(z_6,1)$.

Next, $g_5(0+)>0$ and $g_5(1-)>0$, along with $g_5\,\inc\,\dec$, imply $g_5>0$ on $(0,1)$; 
also, $f_5(0+)<0<f_5(1-)$ implies $f_5$ has a single root $y_5$. 
The refined general rules imply $r_5\,\inc$ on $(0,x_6)$, as $\tilde\rho_5(0+)>0$; 
also, $f_5(0.5)>0$ implies $y_5<0.5<x_6$, so that $f_5>0$, $f_5'>0$, $g_5>0$ and $g_5'<0$ on $(x_6,1)$, and hence $r_5'=\frac{f_5'g_5-f_5g_5'}{g_5^2}>0$ on $(x_6,1)$. 
As $r_5$ is continuous at $x_6$, one has $\rho_4=r_5\,\inc$ on $(0,1)$.

Next, $-\infty=g_4(0+)<0<g_4(1-)$ shows $g_4$ has a single root $x_4$; 
$f_4(0+)=\infty>0>f_4(0.75)$ and $f_4(1-)>0$ shows $f_4$ has two roots $y_{4,1}$ and $y_{4,2}$. 
Also, $g_4(0.75)<0<g_4(0.8)$, $f_4(0.75)<0$, and $f_4(0.8)<0$ together imply $x_4\in(0.75,0.8)\subset(y_{4,1},y_{4,2})$, so that $r_4(x_4-)=\infty$ and $r_4(x_4+)=-\infty$. 
The general rules then imply $\rho_3=r_4\,\inc$ on each of $(0,x_4)$ and $(x_4,1)$.

Next, $g_3(0+)>0=g_3(1-)$ and $g_3\,\dec\,\inc$ shows $g_3$ has a single root $x_3$; 
$f_3(0+)>0=f_3(1-)$ and $f_3\,\inc\,\dec\,\inc$ shows $f_3$ has a single root $y_3$. 
Then $r_3\,\inc$ on $(x_4,1)$ by the special-case rules; 
$g_3(0.5)<0<f_3(0.5)$ yields $x_3<y_3$ (and hence $r_3(x_3-)=\infty$ and $r_3(x_3+)=-\infty$), so that the general rules imply $r_3\,\inc$ on both of $(0,x_3)$ and $(x_3,x_4)$. 
As $r_3$ is continuous at $x_4$, $\rho_2=r_3\,\inc$ on $(0,x_3)$ and $(x_3,1)$.

Next, $g_2(0+)<0=g_2(1-)$ and $f_2(0+)<0=f_2(1-)$ together yield the existence of roots $x_2$ and $y_2$, along with $r_2\,\inc$ on $(x_3,1)$ (via the special-case rules). 
Also, $g_2(0.1)>0>f_2(0.1)$ implies $x_2<y_2$ (and hence $r_2(x_2-)=\infty$ and $r_2(x_2+)=-\infty$), so that the general rules then imply $r_2\,\inc$ on $(0,x_2)$ and $(x_2,x_3)$. Further, $r_2$ is continuous at $x_3$ and hence $\rho_1=r_2\,\inc$ on $(0,x_2)$ and $(x_2,1)$.

Next, $g_1(0+)<0=g_1(1-)$ and $f_1(0+)<0=f_1(1-)$ show that $g_1<0$ and $f_1<0$ on $(0,1)$, and also $r_1\,\inc$ on $(x_2,1)$ by the special-case rules; 
$\tilde\rho_1(0+)>0$ implies via the refined general rules that $r_1\,\inc$ on $(0,x_2)$. 
The continuity of $r_1$ at $x_2$ then shows $\rho_0=r_1\,\inc$ on $(0,1)$.

Lastly, $f_0(1-)=g_0(1-)=0$ shows that $g_0>0$ and further, via the special-case rules, that $q_{T,S;1}=r_0\,\inc$ on $(0,1)$. 
Note $r_0(0+)=\frac{f_0(0+)}{g_0(0+)}$ and, by the l'Hospital rule for limits, $r_0(1-)=r_4(1-)$.
\end{proof}

\begin{proof}[Proof of Theorem~\ref{thm:quad}, (RS0)]
See Appendix~RS0 for more details of the following arguments. 
Set $a=0$ in the notation of Lemma~\ref{lem:q_RS}, so that, in accordance with \eqref{eq:b,c}, $b=\are_{R,S}(0)=\frac{\pi^2}9$ and $c=\are_{R,S}'(0)=0$. 
Then $f_4(0+)=g_4(0+)=0$, $f_5>0$, and $g_5>0$ (from Lemma~\ref{lem:q_RS}) together imply that $f_4>0$, $g_4>0$, and also $\rho_3=r_4\,\inc$ (via the special-case rules).

Next, $g_3\,\inc$ and $g_3(0+)<0<g_3(1-)$ implies the existence of the root $x_3$; that $f_3$ has a single root $y_3$ follows by $f_3\,\inc$ and $f_3(0+)<0<f_3(1-)$. 
From $x_3<y_3$ (implied by $g_3(0.64)>0>f_3(0.64)$) follows $r_3(x_3-)=\infty$ and $r_3(x_3+)=-\infty$; 
the general rules then imply $\rho_2=r_3\,\inc$ on both $(0,x_3)$ and $(x_3,1)$.

That $g_2$ has two distinct roots $x_{2,1}$ and $x_{2,2}$ follows from $g_2(0+)>0>g_2(0.5)$ and $g_2(1-)>0$ (along with $g_2\,\dec\,\inc$); 
similarly, $f_2$ has two roots $y_{2,1}$ and $y_{2,2}$, which follows from $f_2(0+)>0>f_2(0.5)$ and $f_2(1-)>0$. 
Then $g_2(0.33)<0<f_2(0.33)$ shows that $x_{2,1}<y_{2,1}$, and $g_2(0.86)>0>f_2(0.86)$ (together with $0>g_2(0.5)$ and  $0>f_2(0.5)$) show that $y_{2,1}<0.5<x_{2,2}<y_{2,2}$. 
The general rules then imply (since $r_2(x_{2,1}-)=r_2(x_{2,2}-)=\infty$ and $r_2(x_{2,1}+)=r_2(x_{2,2}+)=-\infty$) that $\rho_1=r_2\,\inc$ on the four intervals $(0,x_{2,1})$, $(x_{2,1},x_3)$, $(x_3,x_{2,2})$ and $(x_{2,2},1)$; 
the continuity of $r_2$ at $x_3$ implies $\rho_1=r_2\,\inc$ on $(x_{2,1},x_{2,2})$.

As $f_1(0+)=g_1(0+)=f_1(1-)=g_1(1-)=0$, one finds the existence of roots $x_1$ and $y_1$ (since $g_2$ and $f_2$ are both $+-+$), as well as $r_1\,\inc$ on $(0,x_{2,1})$ and $(x_{2,2},1)$ via the special-case rules. 
Further, $g_1(0.6)<0<f_1(0.6)$ shows $x_1<y_1$ (and hence $r_1(x_1-)=\infty$ and $r_1(x_1+)=-\infty$), so that the general rules imply $r_1\,\inc$ on $(x_{2,1},x_1)$ and $(x_1,x_{2,2})$. 
The continuity of $r_1$ at $x_{2,1}$ and $x_{2,2}$ then implies $\rho_0=r_1\,\inc$ on $(0,x_1)$ and $(x_1,1)$.

Lastly, $f_0(0+)=g_0(0+)=f_0(1-)=g_0(1-)=0$ and $g_0\,\inc\,\dec$ imply that $g_0>0$ and also (by the special-case rules) that $r_0\,\inc$ on both  $(0,x_1)$ and $(x_1,1)$. 
The continuity of $r_0$ at $x_1$ then implies $q_{R,S;0}=r_0\,\inc$ on $(0,1)$. 
The l'Hospital rule for limits implies $r_0(0+)=r_2(0+)$ and $r_0(1-)=r_2(1-)$.
\end{proof}

\begin{proof}[Proof of Theorem~\ref{thm:quad}, (RS1)]
See Appendix~RS1 for more details of the following arguments. 
Adopt the notation of Lemma~\ref{lem:q_RS}, with $a=1$, so that $f(1-)=g(1-)=f'(1-)=g'(1-)=0$ and repeated application of the l'Hospital rule for limits together yield (similar to \eqref{eq:lHosp.b} and \eqref{eq:lHosp.c})
\begin{equation*}
b=\are_{R,S}(1-)=\tfrac{f''(1-)}{g''(1-)}=\tfrac{3(11\sqrt5-15)}{20}
\quad\text{and}\quad
c=\are_{R,S}'(1-)=\tfrac{f'''(1-)g''(1-)-f''(1-)g'''(1-)}{3g''(1-)^2}.
\end{equation*}

From $g_4(0+)<0<g_4(1-)$ and $g_5>0$ follows the existence of $x_4$; 
similarly, $f_4(0+)<0<f_4(1-)$ and $f_5>0$ imply the existence of $y_4$. 
Then $g_4(0.8)>0>f_4(0.8)$ shows $x_4<0.8<y_4$, or hence $r_4(x_4-)=\infty$ and $r_4(x_4+)=-\infty$, and so the general rules imply $\rho_3=r_4$ $\inc$ on both $(0,x_4)$ and $(x_4,1)$.

Next, $g_3\,\dec\inc$ (as $g_4$ is $-+$) and $g_3(0+)=\infty>0=g_3(1-)$ yield the existence of $x_3$; 
that $f_3$ has a single root $y_3$ also follows by $f_3\,\dec\inc$ and $f_3(0+)=\infty>0=f_3(1-)$. 
The special-case rules imply $r_3\,\inc$ on $(x_4,1)$; also $x_3<y_3$ follows from $g_3(0.5)<0<f_3(0.5)$ (whence $r_3(x_3-)=\infty$ and $r_3(x_3+)=-\infty$), and so, the general rules imply $r_3\,\inc$ on both of $(0,x_3)$ and $(x_3,x_4)$. 
Also, $r_3$ is continuous at $x_4$ and hence $\rho_2=r_3\,\inc$ on $(0,x_3)$ and $(x_3,1)$.

As $g_2(0+)<0=g_2(1-)$ and $f_2(0+)<0=f_2(1-)$ (and $g_3$ and $f_3$ are both $+-$), there exist roots $x_2$ and $y_2$; the special-case rules imply $r_2\,\inc$ on $(x_3,1)$. 
Further, $g_2(0.1)>0>f_2(0.1)$ shows $x_2<0.1<y_2$ and hence $r_2(x_2-)=\infty$ and $r_2(x_2+)=-\infty$. 
The general rules then imply $r_2\,\inc$ on $(0,x_2)$ and $(x_2,x_3)$; 
the continuity of $r_2$ at $x_3$ then implies $\rho_1=r_2\,\inc$ on $(0,x_2)$ and $(x_2,1)$.

One finds that $g_1<0$ and $f_1<0$ on $(0,1)$, as $g_1(0+)<0=g_1(1-)$ (with $g_1\,\dec\inc$) and $f_1(0+)<0=f_1(1-)$ (with $f_1\,\dec\inc$), which further imply by the special-case rules that $r_1\,\inc$ on $(x_2,1)$. 
Also, $\tilde\rho_1(0+)>0$ implies via the refined general rules that $\tilde\rho_1>0$, or $r_1\,\inc$, on $(0,x_2)$; 
as $r_1$ is continuous on $(0,1)$, one sees $\rho_0=r_1\,\inc$ on $(0,1)$. 

Lastly, $f_0(1-)=g_0(1-)=0$ imply in the first place that $g_0>0$ (as $g_0\,\dec$), and in the second place that $q_{R,S;1}=r_0\,\inc$ on $(0,1)$ (via the special-case rules). 
The l'Hospital rule for limits implies $r_0(1-)=r_4(1-)$, and $g_0(0+)>0$ implies $r_0(0+)=\frac{f_0(0+)}{g_0(0+)}$.
\end{proof}

\begin{proof}[Proof of Corollary~\ref{cor:mon}]
As the $\are$'s are even functions here, one has $\are'(0)=0$, and hence $\are(x)=\are(0)+x^2q_0(x)$ for $x\in(0,1)$. 
Theorem~\ref{thm:quad} shows $q_0\,\inc$ and $q_0(0+)>0$, which imply $q_0>0$ on $(0,1)$; hence $\are\,\inc$ on $(0,1)$ as well. 
The values $\are(0+)$ and $\are(1-)$ are exactly those values of $b$ given at the beginning of the proof of each of the six parts of Theorem~\ref{thm:quad}.
\end{proof}

\begin{proof}[Proof of Corollary~\ref{cor:bounds}]
The result immediately follows from Theorem~\ref{thm:quad}:
\begin{align*}
&(x-a)^2q_a(0+)<\are(x)-\are(a)-\are'(a)(x-a)<(x-a)^2q_a(1-)
\\&\qquad\Rightarrow\quad
L_a(x)<\are(x)<U_a(x)
\end{align*}
for all $x\in(0,1)$ and $a\in\{0,1\}$. 
Replacing ``$x$'' with ``$-x$'' in the above inequality when $x\in(-1,0)$ and recalling the $\are$ is even yields the desired results.
\end{proof}

\begin{proof}[Proof of Corollary~\ref{cor:quartic}]
Noting that $\are_{R,S}=\are_{R,T}\cdot\are_{T,S}$, one has $L_{R,T}\cdot L_{T,S}<\are_{R,S}<U_{R,T}\cdot U_{T,S}$ on $(-1,1)\setminus\{0\}$. 
That $\tilde L_{R,S}>L_{R,S}$ and $\tilde U_{R,S}<U_{R,S}$ is easily verifed by noting $\tilde L_{R,S}-L_{R,S}$ and $\tilde U_{R,S}-U_{R,S}$ have no roots on $(-1,1)\setminus\{0\}$ and verifying their appropriate signs.
\end{proof}

\includepdf[addtotoc={1,section,1,Appendix RT,appRT},pages=-]{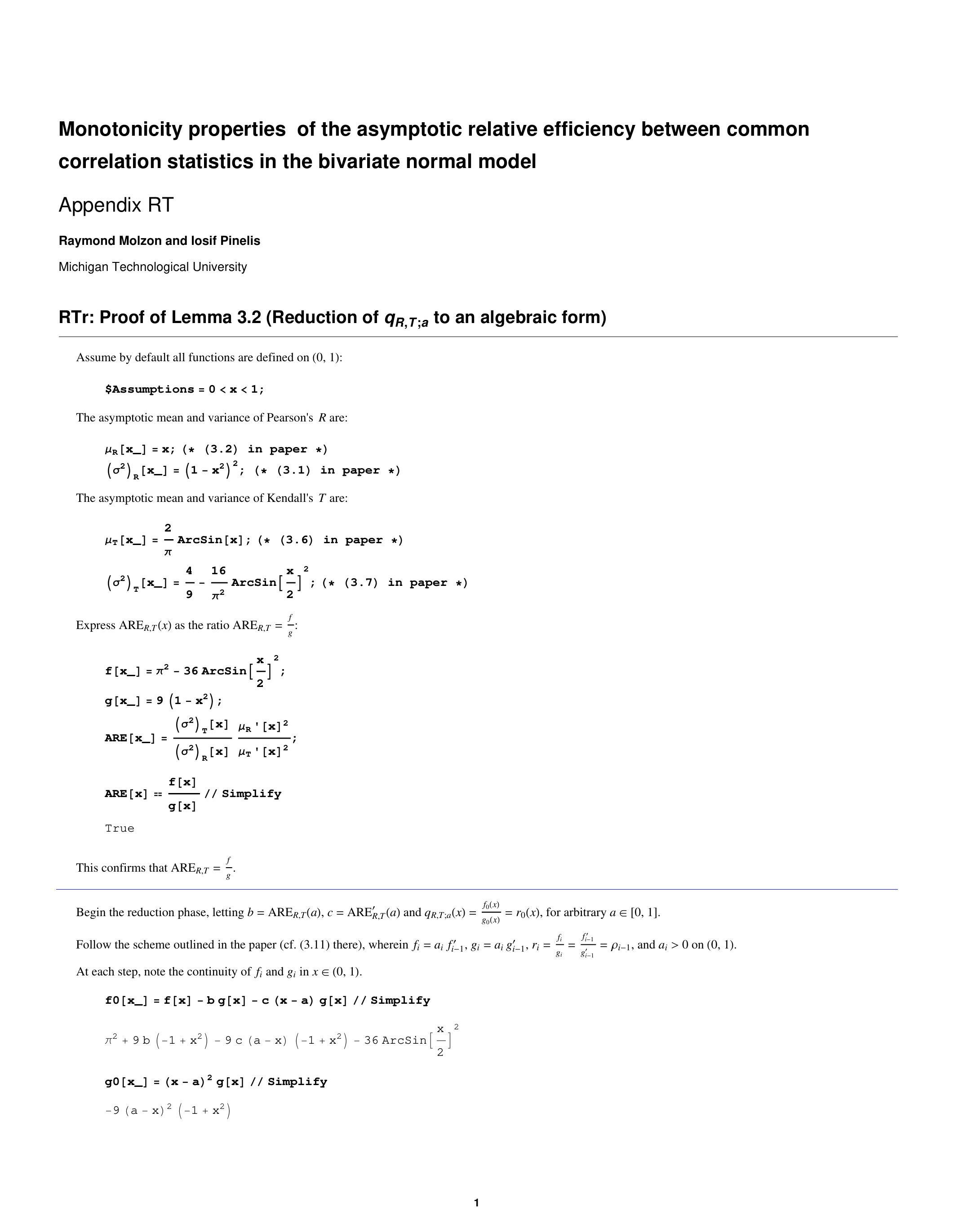}

\includepdf[addtotoc={1,section,1,Appendix TS,appTS},pages=-]{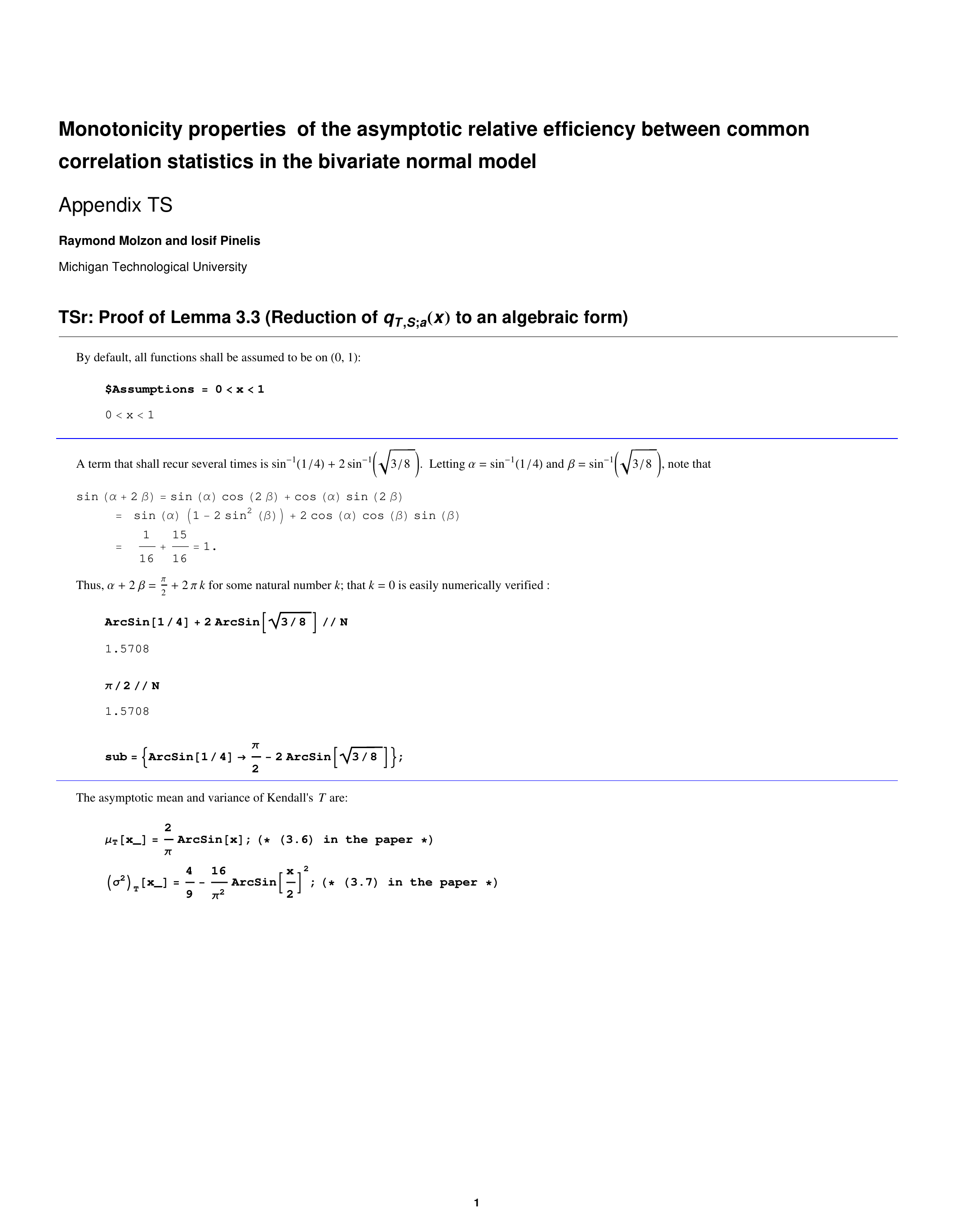}

\includepdf[addtotoc={1,section,1,Appendix RS,appRS},pages=-]{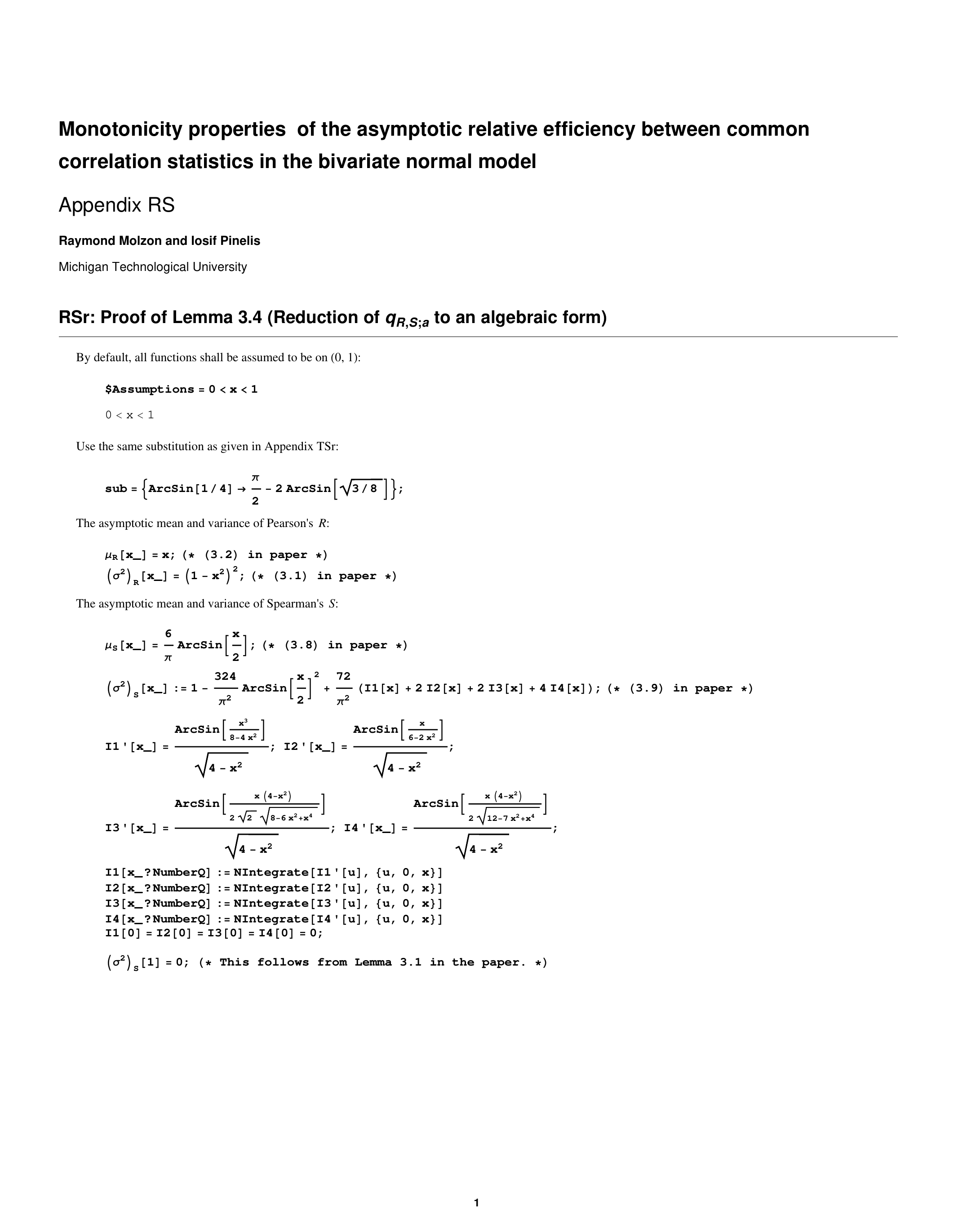}

\end{document}